\numberwithin{equation}{section}
\theoremstyle{plain}
\newtheorem{theorem}{Theorem}[section]
\newtheorem{lemma}[theorem]{Lemma}
\newtheorem{proposition}[theorem]{Proposition}
\theoremstyle{definition}
\newtheorem{definition}[theorem]{Definition}
\newtheorem{example}[theorem]{Example}
\theoremstyle{remark}
\newtheorem{remark}[theorem]{Remark}
\newcounter{counter_a}
\newenvironment{myenum}{\begin{list}{{\rm(\roman{counter_a})}}%
{\usecounter{counter_a}
\setlength{\itemsep}{0.5ex}\setlength{\topsep}{1.1ex}
\setlength{\leftmargin}{5ex}\setlength{\labelwidth}{5ex}}}{\end{list}}
\newcounter{counter_b}
\DeclareMathOperator\Real{Re}
\renewcommand\Re{\Real}
\newcommand\cD{\mathcal D}
\newcommand\cH{\mathcal H}
\newcommand\cK{\mathcal K}
\newcommand\cL{\mathcal L}
\newcommand\cM{\mathcal M}
\newcommand\cV{\mathcal V}
\newcommand\CC{\mathbb C}
\newcommand\NN{\mathbb N}
\newcommand\RR{\mathbb R}
\newcommand\fra{\mathfrak a}
\newcommand\frb{\mathfrak b}
\newcommand\frc{\mathfrak c}
\newcommand\frd{\mathfrak d}
\newcommand\frh{\mathfrak h}
\newcommand\frt{\mathfrak t}
\newcommand\frv{\mathfrak v}
\newcommand\eps{\varepsilon}
\newcommand\ov{\overline}
\newcommand\rd{\mathrm{d}}
\newcommand{\defeq}{\mathrel{\mathop:}=}
\newcommand{\defequ}{\mathrel{\mathop:}\hspace*{-0.72ex}&=}
\newcommand\sess{\sigma_{\rm ess}}
\newcommand\sigmadis{\sigma_{\rm dis}}
\newcommand\M{\mathbf{M}}
\newcommand\lae{\lambda_{\rm e}}
\newcommand\VM{{\rm(VM$^-$)}}
\newcommand\VMbf{{\rm\textbf{(VM{\boldmath$^-$}\!)}}}
\DeclareMathOperator\dom{dom}
\DeclareMathOperator\ran{ran}
\DeclareMathOperator\spn{span}
\DeclareMathOperator\diag{diag}
\newcommand\void[1]{}
\begin{document}

\title[Triple variational principles]{Triple variational principles for \\ self-adjoint
operator functions}

\author{Matthias Langer}

\address{Department of Mathematics and Statistics,
University of Strathclyde \newline
26 Richmond Street, Glasgow G1 1XH, United Kingdom}

\author{Michael Strauss}

\address{School of Mathematics, Cardiff University \newline
Senghennydd Road, Cardiff CF24 4AG, United Kingdom}

\begin{abstract}
{For a very general class of unbounded self-adjoint operator function we prove
upper bounds for eigenvalues which lie within arbitrary gaps of the essential spectrum.
These upper bounds are given by triple variations.
Furthermore, we find conditions which imply that a point is in the resolvent set.
For norm resolvent continuous operator functions we show that the variational
inequality becomes an equality.
\\[1ex]
\textsc{Keywords:} Variational principle for eigenvalues, operator function, spectral decomposition
\\[1ex]
\textsc{Mathematics Subject Classification (2010):}
primary 49R05; secondary 47A56, 47A10
}
\end{abstract}

\maketitle

\section{Introduction}

In many applications of operator and spectral theory eigenvalue problems appear
which are nonlinear in the eigenvalue parameter, e.g.\ polynomially or rationally.
Very often such problems can be dealt with by introducing a function of the
spectral parameter whose values are linear operators in a Hilbert space.
To be more specific, let $T(\cdot)$ be an operator function that is defined on some
set $\Delta\subset\CC$ and whose values are closed linear operators in a
Hilbert space $(\cH,\langle\cdot\,,\cdot\rangle)$; for each $\lambda\in\Delta$
the domain of the operator $T(\lambda)$ is denoted by $\dom(T(\lambda))$.
A number $\lambda\in\Delta$ is called an \emph{eigenvalue} of the operator function $T$ if
there exists an $x\in\dom(T(\lambda))\setminus\{0\}$ such that $T(\lambda)x=0$,
i.e.\ $0$ is in the point spectrum of the operator $T(\lambda)$.
The \emph{spectrum}, \emph{essential spectrum}, \emph{discrete spectrum}
and \emph{resolvent set} of $T$ are defined as follows:
\begin{align*}
  \sigma(T) \defequ \bigl\{\lambda\in\Delta: 0\in\sigma(T(\lambda))\bigr\}, \\[1ex]
  \sess(T) \defequ \bigl\{\lambda\in\Delta: 0\in\sess(T(\lambda))\bigr\}
  = \bigl\{\lambda\in\Delta: T(\lambda)\text{ is not Fredholm}\bigr\}, \\[1ex]
  \sigmadis(T) \defequ \sigma(T)\setminus\sess(T), \\[1ex]
  \rho(T) \defequ \bigl\{\lambda\in\Delta: 0\in\rho(T(\lambda))\bigr\};
\end{align*}
note that a closed operator is called Fredholm if the dimension of the kernel and the
(algebraic) co-dimension of the range are finite.  A trivial example of an
operator function is given by $T(\lambda)=A-\lambda I$ where $A$ is a closed operator;
in this case the spectra of the operator function $T$
and the operator $A$ clearly coincide.
More complicated examples are operator polynomials or Schur complements of block
operator matrices; see, e.g.\ \cite{markus_book,tretter_book}
and references therein; see also the survey article \cite{tisseur_meerbergen01} about numerical
methods for eigenvalues of quadratic matrix polynomials.

It is our aim to show spectral enclosures and variational principles for eigenvalues
of operator functions.  In the 1950s R.\,J.~Duffin \cite{duffin55} proved a variational
principle for eigenvalues of certain quadratic matrix polynomials, which was generalised to
infinite-dimensional spaces and more general operator functions in the following decades;
see, e.g.\ \cite{rogers64,turner67,hadeler68,werner71,barsten74,markus_book}.
Basically, the following situation was considered.
Let $T$ be a differentiable function defined on an interval $[\alpha,\beta]$ whose values
are bounded self-adjoint operators in a Hilbert space $\cH$ such that $T(\alpha)\gg0$
(i.e.\ $T(\alpha)$ is uniformly positive) and $T(\beta)\ll0$.
Moreover, for every $x\in\cH\setminus\{0\}$ the
scalar function $\lambda\mapsto\langle T(\lambda)x,x\rangle$ has exactly one zero
in $(\alpha,\beta)$, which we denote by $p(x)$,
and the inequality $\langle T'(p(x))x,x\rangle<0$ holds.
The mapping $x\mapsto p(x)$ is called a generalised Rayleigh functional.
The eigenvalues of $T$ below the essential spectrum of $T$ can accumulate
at most at the bottom of $\sess(T)$; if they are denoted by
$\lambda_1\le\lambda_2\le\cdots$, then they are characterised by the following
variational principle:
\begin{equation}\label{var_princ_old}
  \lambda_n = \min_{\substack{\cL\subset\cH \\[0.2ex] \dim\cL=n}}\;
  \max_{\substack{x\in\cL \\[0.2ex] x\ne 0}}\;p(x)
  = \max_{\substack{\cL\subset\mathcal{H} \\[0.2ex] \dim\cL=n-1}}\;\;
  \min_{\substack{x\in\cH \\[0.2ex] x\perp\cL,~x\ne 0}}\;p(x);
\end{equation}
here $\cL$ denotes finite-dimensional subspaces of $\cH$.
If $T(\lambda)=A-\lambda I$ where $A$ is a bounded self-adjoint operator,
then \eqref{var_princ_old} reduces to the standard variational principle for eigenvalues
of a self-adjoint operator; the generalised Rayleigh functional is then just the
classical Rayleigh quotient: $p(x)=\frac{\langle Ax,x\rangle}{\|x\|^2}$.

In \cite{BEL00} the assumption that $T(\alpha)$ is uniformly positive was
relaxed and replaced by the assumption that the negative spectrum of $T(\lambda)$
consists of only a finite number $\kappa$ of eigenvalues (counted with multiplicities),
in which case $n$ has to be replaced by $n+\kappa$ in the variations over the subspaces;
also the generalised Rayleigh quotient has to be slightly modified
(see Definition~\ref{def:rayleigh_Mg+}\,(i) below);
cf.\ also \cite{VW82,voss15}.
In \cite{EL04} also functions whose values are unbounded operators were allowed;
see also \cite{JLT}.


The main aim of our paper is to remove the assumption of the finiteness of the
negative spectrum of $T(\alpha)$ and to allow also the characterisation of eigenvalues
in gaps of the essential spectrum.  In order to do this, a third variation is needed;
see Theorem~\ref{th:var_equ}, the main result of the paper.
This theorem greatly sharpens and extends \cite[Theorem~2.4]{EL02}, where only an
inequality was shown for operator functions and where it was assumed that the
values are bounded operators (for some quadratic polynomials equality was proved).
As part of the proof of Theorem~\ref{th:var_equ} we also show such an
inequality (Theorem~\ref{th:varinequ}) for a class of operator functions with
less continuity assumptions then needed in Theorem~\ref{th:var_equ}.
To our knowledge the first triple variational principle appeared in \cite{phillips70}
where eigenvalues of positive operators in Krein spaces were characterised;
this was generalised in \cite{textorius74}.

Our second main result, Theorem~\ref{th:resolv}, is connected with the inequality
in Theorem~\ref{th:varinequ} and gives a sufficient condition for points being
in the resolvent set of an operator function.  In Theorem~\ref{th:nonneg_pert}
this is used to obtain the existence of spectral gaps for perturbed self-adjoint operators.
In a forthcoming paper \cite{LS_bom} we will also apply Theorem~\ref{th:resolv}
to prove spectral inclusions for certain block operator matrices.

Let us give a brief synopsis of the paper.
In Section~\ref{sec:var_princ} we state and prove the result about points in the
resolvent set of an operator function (Theorem~\ref{th:resolv}) and the variational
inequality (Theorem~\ref{th:varinequ}).  We should mention that also an inequality
for the essential spectrum is obtained.
In Section~\ref{sec:selfadjoint_op} we consider self-adjoint operators, which need
not be semi-bounded, and prove a variational principle for eigenvalues
in arbitrary gaps of the essential spectrum (Theorem~\ref{th:selfadjointops}).
Moreover, the above mentioned perturbation result for spectral gaps is proved
there (Theorem~\ref{th:nonneg_pert}).
These results are applied to Dirac operators and to Schr\"odinger operators
with perturbed periodic potentials.
In Section~\ref{sec:decomp} we prove a decomposition of the Hilbert space into a
direct sum of three subspaces, one being the span of the eigenvectors corresponding
to eigenvalues in an interval and the other two being spectral subspaces connected
with the operators at the two endpoints of the interval (Theorem~\ref{th:decomp}).
This is the main ingredient in the proof of the other inequality of the variational
principle in Theorem~\ref{th:var_equ}.  Further, in Section~\ref{sec:decomp} we
prove that eigenvalues cannot accumulate outside the essential spectrum of an
analytic operator function (Proposition~\ref{pr:holom}).
Finally, in Section~\ref{sec:var_equ} we prove the triple variational principle
for eigenvalues of norm resolvent continuous operator functions.
The result is illustrated with a quadratic operator polynomial.

Throughout this paper the term `subspace' refers to a linear manifold, which is not
necessarily closed.
Moreover, $\dotplus$ denotes a direct sum of two subspaces.

\section{A general variational inequality}\label{sec:var_princ}

In this section we consider a rather general class of self-adjoint
operator functions and prove variational inequalities for eigenvalues.
Moreover, we give sufficient conditions for points to belong to the resolvent
set of such operator functions.

Let $A$ be a self-adjoint operator in a Hilbert space $\cH$ and let $E$ be
its spectral measure.
We define the corresponding sesquilinear form $\fra$ by
\begin{equation}\label{defform}
  \fra[x,y] \defeq \int_\RR \mu\, \rd\langle E(\mu)x,y\rangle
\end{equation}
for $x,y\in\dom(\fra)\defeq\dom\bigl(|A|^{1/2}\bigr)$.
Moreover, we introduce the quadratic form
\[
  \fra[x]\defeq\fra[x,x], \qquad x\in\dom(\fra).
\]
Note that, for $x\in\dom(A)$ and $y\in\dom(\fra)$, we have $\fra[x,y]=\langle Ax,y\rangle$.
If $A$ is bounded from below, then this definition clearly coincides with the
definition in \cite[\S IV.1.5]{katopert}.
For more information on non-semi-bounded forms see, e.g.\ \cite{FHS00,GKMV}.

Let $\cL$ be a (not necessarily closed) subspace of $\dom(\fra)$.  We say
that $\cL$ is $\fra$-\emph{non-negative} if
\[
  \fra[x] \ge 0 \qquad \text{for every $x\in\cL$;}
\]
$\cL$ is called \emph{maximal} $\fra$-\emph{non-negative} if it cannot be
extended to a larger subspace with the same property.

Throughout the paper denote by $\cL_\Delta(A)$ the spectral subspace for $A$
corresponding to a Borel set $\Delta\subset\RR$, i.e.\ $\cL_\Delta(A)=\ran E(\Delta)$.

\bigskip

\noindent
\textbf{Assumptions (A1)--(A3).} \\
Let $T$ be an operator function defined on some interval $\Delta\subset\RR$
whose values are operators in a Hilbert space $\cH$.
We assume that the following conditions are satisfied:
\newcounter{counter_assump}
\begin{list}{{\rm\textbf{(A\arabic{counter_assump})}}}%
{\usecounter{counter_assump}
\setlength{\itemsep}{0.5ex}\setlength{\topsep}{1.1ex}
\setlength{\leftmargin}{7ex}\setlength{\labelwidth}{7ex}}
\item
$T(\lambda)$ is self-adjoint for every $\lambda\in\Delta$
with corresponding quadratic form $\frt(\lambda)$;
\item
$\dom(\frt(\lambda))=\dom\bigl(|T(\lambda)|^{1/2}\bigr)$ is independent of $\lambda$
and denoted by $\cD$;
\item
for each $x\in\cD\setminus\{0\}$, the function $\lambda\mapsto\frt(\lambda)[x]$
is continuous and decreasing at value $0$, i.e.\ if $\frt(\lambda_0)[x]=0$ for
some $\lambda_0\in\Delta$, then
\begin{alignat*}{2}
  \frt(\lambda)[x] &> 0 \qquad & &\text{for }\lambda\in\Delta\text{ such that }\lambda<\lambda_0, \\[0.5ex]
  \frt(\lambda)[x] &< 0 \qquad & &\text{for }\lambda\in\Delta\text{ such that }\lambda>\lambda_0.
\end{alignat*}
\end{list}

\noindent
Occasionally --- in particular, when the essential spectrum is involved --- we need
the following condition, which is named after A.~Virozub and V.~Matsaev (see \cite{VM74}
and also \cite{LMM06,LLMT08}):
\begin{list}{{\rm\textbf{\VMbf}}}%
{
\setlength{\itemsep}{0.5ex}\setlength{\topsep}{1.1ex}
\setlength{\leftmargin}{10ex}\setlength{\labelwidth}{10ex}}
\item
for every $u\in\cD$, the function $\frt(\cdot)[u]$ is differentiable on $\Delta$ and,
for every compact subinterval $I$ of $\Delta$, there exist $\eps,\delta>0$
such that, for all $x\in\cD$ with $\|x\|=1$ and all $\lambda\in I$,
\begin{equation}\label{epsdeltaimpl}
  \bigl|\frt(\lambda)[x]\bigr| \le \eps \quad\Longrightarrow\quad
  \frt'(\lambda)[x] \le -\delta.
\end{equation}
\end{list}

\noindent
Obviously, for fixed $\lambda\in I$, this condition (i.e.\ \eqref{epsdeltaimpl}
for $x\in\cD$ with $\|x\|=1$) is equivalent to the condition that
\begin{equation}\label{VMepsdelta}
  \bigl|\frt(\lambda)[x]\bigr| \le \eps\|x\|^2 \quad\Longrightarrow\quad
  \frt'(\lambda)[x] \le -\delta\|x\|^2
\end{equation}
for all $x\in\cD$.

In \cite{VM74,LMM06,LLMT08} the Virozub--Matsaev condition was studied
with $\frt'(\lambda)\ge\delta$ instead of $\frt'(\lambda)\le-\delta$.
Moreover, the definition was slightly different
but equivalent to ours (apart from the different sign) for the functions considered
in \cite{LLMT08}, which were assumed to have bounded operators as values and to
be continuously differentiable in the operator norm, cf.\ \cite[Lemma~3.6]{LLMT08}.

Next we define the notion of a generalised Rayleigh functional.
First note that, by Assumption (A3), the function $\lambda\mapsto\frt(\lambda)[x]$
has at most one zero for a given $x\in\cD\setminus\{0\}$.  If it has a zero,
we define a generalised Rayleigh functional $p(x)$ to be equal to this zero;
otherwise, we assign a value outside $\Delta$.  More precisely, we define
a generalised Rayleigh functional as follows.

\begin{definition}\label{def:rayleigh_Mg+}
Let $T$ be an operator function defined on $\Delta$ that satisfies Assumptions (A1)--(A3)
and let $\frt(\lambda)$ be the corresponding forms.
\begin{myenum}
\item
A functional $p:\cD\setminus\{0\}\to\RR\cup\{\pm\infty\}$ is called
a \emph{generalised Rayleigh functional for $T$ on} $\Delta$ if, for all $x\in\cD\setminus\{0\}$,
\begin{alignat*}{2}
  p(x) &= \lambda_0 & &\text{if } \frt(\lambda_0)[x]=0, \\[1ex]
  p(x) &< \lambda \;\;\text{for all }\lambda\in\Delta \qquad &
  &\text{if }\frt(\mu)[x]<0 \;\;\text{for all } \mu\in\Delta, \\[1ex]
  p(x) &> \lambda \;\;\text{for all }\lambda\in\Delta \qquad &
  &\text{if }\frt(\mu)[x]>0 \;\;\text{for all } \mu\in\Delta.
\end{alignat*}
\item
For $\gamma\in \Delta$ set
\[
  \M_\gamma^+ \defeq \bigl\{\cM: \cM\text{ is a maximal
  $\frt(\gamma)$-non-negative subspace of }\cD\bigr\}.
\]
\end{myenum}
\end{definition}

\noindent
In \cite{BEL00}, \cite{EL02} and \cite{EL04} generalised Rayleigh functionals were defined
such that $p(x)=-\infty$ and $p(x)=+\infty$ in the second and third case in (i) above.
This does not change results, but our definition gives more flexibility in applications;
cf.\ also \cite[\S 3]{JLT}.
Note that the choice with $\pm\infty$ is also allowed in our definition.
Note that, for all $\lambda\in\Delta$ and $x\in\cD\setminus\{0\}$,
\begin{equation} \label{T104}
  p(x) \lesseqqgtr \lambda \quad\Longleftrightarrow\quad
  \frt(\lambda)[x] \lesseqqgtr 0.
\end{equation}
Moreover, if $\lambda_0$ is an eigenvalue of $T$ with eigenvector $x_0$,
i.e.\ $T(\lambda_0)x_0=0$, then $p(x_0)=\lambda_0$.

The next two theorems are the main results of this section. The first one can be
used to show that some point is in the resolvent set of an operator function.
The second one, which is a generalisation of \cite[Theorem~2.4]{EL02}, gives
triple variational inequalities for eigenvalues and the
bottom of the essential spectrum of an operator function.

\begin{theorem} \label{th:resolv}
Assume that $T$ satisfies {\rm(A1)--(A3)} and \VM.
Let $\mu_1,\mu_2\in\Delta$ with $\mu_1<\mu_2$.
If there exist $\cM\in\M_{\mu_1}^+$ and $a>0$ such that
\begin{equation}\label{cond_resolv}
  \frt(\mu_2)[x] \ge a\|x\|^2 \qquad \text{for all } x\in\cM,
\end{equation}
then $\mu_2\in\rho(T)$.
\end{theorem}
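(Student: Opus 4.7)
The plan is to argue by contradiction: suppose $0\in\sigma(T(\mu_2))$ and produce a strictly larger $\frt(\mu_1)$-non-negative extension of $\cM$, violating maximality. The engine throughout is property (A3), which turns $\frt(\mu_2)[v]\geq 0$ on $v\neq 0$ into $\frt(\mu_1)[v]>0$; the hypothesis enters through the fact that any $x$ with $\frt(\mu_2)[x]<a\|x\|^2$ is automatically outside $\cM$, so that every such $x$ is a candidate for enlarging $\cM$.

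I would handle first the eigenvalue subcase. If $T(\mu_2)x_0=0$ with $x_0\neq 0$, then $\frt(\mu_2)[x_0,y]=\langle T(\mu_2)x_0,y\rangle=0$ for every $y\in\cD$, so in $\cM+\spn\{x_0\}$ the cross terms vanish and $\frt(\mu_2)[m+cx_0]=\frt(\mu_2)[m]\geq a\|m\|^2\geq 0$. Since $x_0\notin\cM$, this strictly enlarges $\cM$, and applying (A3) at each nonzero $v=m+cx_0$ gives $\frt(\mu_1)[v]>0$ (directly when $\frt(\mu_2)[v]>0$, and via the zero of $\frt(\cdot)[v]$ at $\mu_2$ when $m=0$ and $cx_0\neq 0$). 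This contradicts the maximality of $\cM$.

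For $0\in\sess(T(\mu_2))$ I would take a Weyl sequence $x_n\in\ran E_{T(\mu_2)}([-1/n,1/n])$ with $\|x_n\|=1$ and reduce to a one-sided half of the spectrum of $T(\mu_2)$. In the positive half, $x_n\in\ran E_{T(\mu_2)}([0,1/n))$, so $\alpha_n:=\frt(\mu_2)[x_n]\in[0,1/n)$ and $\beta_n:=\|T(\mu_2)x_n\|$ satisfies the spectral-concentration estimate $\beta_n^2\leq\alpha_n/n$. For every $m\in\cM$, $c\in\CC$ and $n\geq 1/a$, the quadratic lower bound
\[
  \frt(\mu_2)[m+cx_n]\;\geq\; a\|m\|^2-2|c|\beta_n\|m\|+|c|^2\alpha_n
\]
has non-positive discriminant (since $a\alpha_n\geq\beta_n^2$), so $\cM+\spn\{x_n\}$ is $\frt(\mu_2)$-non-negative and, by (A3), $\frt(\mu_1)$-positive on nonzero vectors; combined with $x_n\notin\cM$ for $n>1/a$, this again contradicts maximality.

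The main obstacle is the negative half, $\alpha_n\leq 0$, where the above quadratic bound no longer forces $\frt(\mu_2)[m+cx_n]\geq 0$. Here I would bring in \VM: on $[\mu_1,\mu_2]$ it provides $\eps,\delta>0$ with $\frt'(\mu_2)[x_n]\leq-\delta$ for large $n$, so that the Rayleigh value $p(x_n)$ tends to $\mu_2$ from below and a Taylor/monotonicity argument on $[p(x_n),\mu_2]$ yields a uniform lower bound $\frt(\mu_1)[x_n]\geq c>0$. Maximality then produces $y_n=m_n+c_nx_n\neq 0$ with $\frt(\mu_1)[y_n]<0$, and (A3) propagates this to $\frt(\mu_2)[y_n]<0$ as well. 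Plugging these two sign constraints into the quadratic estimate for $\frt(\mu_2)[y_n]$ forces $\|m_n\|\to 0$ while $|c_n|$ stays bounded away from $0$ (after normalising $\|y_n\|=1$), so that $y_n$ is asymptotically a unit multiple of $x_n$ and $\frt(\mu_1)[y_n]$ must approach $|c_n|^2\frt(\mu_1)[x_n]\geq c|c_n|^2>0$, contradicting $\frt(\mu_1)[y_n]<0$. The delicate step is controlling the cross term $\frt(\mu_1)[x_n,m_n]$ uniformly; I expect this to require the weak convergence of $x_n$ in the form topology on $\cD$ (available because $\|x_n\|_\cD$ is bounded by the equivalence of the form graph norms at $\mu_1$ and $\mu_2$), together with the quantitative link between $\frt(\mu_1)$ and $\frt(\mu_2)$ supplied by \VM.
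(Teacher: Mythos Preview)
Your overall strategy --- assume $0\in\sigma(T(\mu_2))$, pick an approximate null vector $v_0$, and show that $\cM+\spn\{v_0\}$ is $\frt(\mu_1)$-non-negative, contradicting maximality --- matches the paper's. The eigenvalue case and the ``positive half'' are fine (and indeed avoid \VM). The gap is in your ``negative half'': the asymptotic argument for controlling the cross term $\frt(\mu_1)[x_n,m_n]$ does not close. You deduce $\|m_n\|\to0$ in $\cH$, but the cross term is bounded only by $\||T(\mu_1)|^{1/2}x_n\|\cdot\||T(\mu_1)|^{1/2}m_n\|$, and nothing in your estimates forces $\|m_n\|_\cD\to0$: the hypotheses $\frt(\mu_1)[m_n]\ge0$ and $\frt(\mu_2)[m_n]\ge a\|m_n\|^2$ control only the \emph{signed} forms, not $\bigl\||T(\mu_i)|^{1/2}m_n\bigr\|$. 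Weak convergence of $x_n$ in $\cD$ would help if $m_n$ were fixed, but it varies with $n$. So the contradiction $\frt(\mu_1)[y_n]\to|c_n|^2\frt(\mu_1)[x_n]>0$ is not established.

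The paper avoids the case split and the limiting argument altogether by using \VM{} \emph{quantitatively} rather than (A3) qualitatively. The point is that you do not need $\frt(\mu_2)[u+v_0]\ge0$; it suffices to have $\frt(\mu_2)[u+v_0]\ge -c\|u+v_0\|^2$ with $c=\min\{\eps,\delta(\mu_2-\mu_1)\}$, because then \VM{} (integrated, as in Lemma~\ref{le:02}) gives $\frt(\mu_1)[u+v_0]>0$ directly. And this weaker inequality follows from an elementary two-variable quadratic estimate (Lemma~\ref{le:01}): with $\frt(\mu_2)[u]\ge a\|u\|^2$ and $\|T(\mu_2)v_0\|\le b\|v_0\|$, one gets $\frt(\mu_2)[u+v_0]+c\|u+v_0\|^2>0$ as soon as $ac>b(a+b+3c)$, which holds for $b$ small. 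No sign assumption on $\frt(\mu_2)[v_0]$ is needed, no sequences, and the whole argument takes a single $v_0$ with $\|T(\mu_2)v_0\|$ small enough.
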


\begin{theorem} \label{th:varinequ}
Let $\Delta\subset\RR$ be an interval with right endpoint $\beta\in\RR\cup\{+\infty\}$
and let $T$ be an operator function defined on $\Delta$ which satisfies Assumptions {\rm(A1)--(A3)}.
Moreover, let $p$ be a generalised Rayleigh functional for $T$ on $\Delta$,
let $\gamma\in\rho(T)$ with $\gamma<\beta$, and set
\begin{equation}\label{deflae}
  \lae \defeq \begin{cases}
    \inf\bigl(\sess(T)\cap(\gamma,\beta)\bigr)
    & \text{if } \sess(T)\cap(\gamma,\beta)\ne\varnothing, \\[1ex]
    \beta & otherwise.
  \end{cases}
\end{equation}
Let $(\lambda_j)_{j=1}^N$, $N\in\NN_0\cup\{\infty\}$, be a finite or infinite
sequence of eigenvalues of\, $T$ in the interval $(\gamma,\lae)$ in non-decreasing order
such that, for each set of $k$ coinciding eigenvalues, say
$\lambda_i=\lambda_{i+1}=\ldots=\lambda_{i+k-1}$, one has $\dim\ker T(\lambda_i)\ge k$.
Then
\begin{equation} \label{varinequ}
  \sup_{\cM\in\M_\gamma^+} \sup_{\substack{\cL\subset\cM \\[0.2ex] \dim\cL=n-1}}
  \inf_{\substack{x\in\cM\setminus\{0\} \\[0.2ex] x\perp\cL}}\; p(x) \le \lambda_n,
  \qquad n\in\NN,\,n\le N.
\end{equation}
Moreover, if $T$ satisfies the condition \VM{} and
$\sess(T)\cap(\gamma,\beta)\ne\varnothing$, then
\begin{equation} \label{varinequ_e}
  \sup_{\cM\in\M_\gamma^+} \sup_{\substack{\cL\subset\cM \\[0.2ex] \dim\cL=n-1}}
  \inf_{\substack{x\in\cM\setminus\{0\} \\[0.2ex] x\perp\cL}}\; p(x) \le \lae,
  \qquad n\in\NN.
\end{equation}
\end{theorem}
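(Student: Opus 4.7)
The plan is to prove \eqref{varinequ} by showing that for every $\cM \in \M_\gamma^+$ there exists a subspace $\cF \subset \cM$ of dimension $n$ on which $\frt(\lambda_n)[x] \le 0$. For any $\cL \subset \cM$ with $\dim \cL = n-1$, the codimension count $\dim(\cF \cap \cL^\perp) \ge 1$ then yields a non-zero $x \in \cM \cap \cL^\perp$ with $\frt(\lambda_n)[x] \le 0$, whence $p(x) \le \lambda_n$ by \eqref{T104}.

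To construct $\cF$ I would track the index
\[
  \nu_-(\cM,\mu) \defeq \sup\bigl\{\dim \cG : \cG \subset \cM,\ \frt(\mu)[x] \le 0 \text{ for all } x \in \cG\bigr\}
\]
as $\mu$ increases from $\gamma$ to $\lambda_n$. Since $\frt(\gamma) \ge 0$ on $\cM$, $\nu_-(\cM,\gamma) = 0$, and the target $\nu_-(\cM,\lambda_n) \ge n$ reduces to the claim that each eigenvalue $\lambda_j \in (\gamma,\lambda_n]$ contributes at least its multiplicity to $\nu_-$ as $\mu$ crosses $\lambda_j$. For linearly independent eigenvectors $y_j \in \ker T(\lambda_j)$ (available by the multiplicity convention), (A3) gives $\frt(\mu)[y_j] > 0$ for $\mu < \lambda_j$ and $\frt(\mu)[y_j] < 0$ for $\mu > \lambda_j$, the pointwise sign change needed. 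Converting this into an index jump inside $\cM$ uses that $\gamma \in \rho(T)$ forces $\frt(\gamma)$ to be non-degenerate and $\cM$ to admit a graph representation over the positive spectral subspace of $T(\gamma)$: any $y_j \notin \cM$ can be corrected by an element of the negative spectral subspace of $T(\gamma)$ to give $y_j^\sharp \in \cM$ whose sign behaviour near $\lambda_j$ inherits that of $y_j$. Assembling these contributions yields the required $\cF$.

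For \eqref{varinequ_e} I would substitute for the eigenvector data a Weyl sequence $(w_m) \subset \cD$ for $0 \in \sess(T(\lae))$, chosen so that $\frt(\lae)[w_m] \le 0$ (possible since $0$ accumulates in $\sigma(T(\lae))$ from at least one side). Each $w_m$ then has $p(w_m) \le \lae$, and weak null-convergence of $(w_m)$ gives infinitely many linearly independent such vectors in $\cD$. The Virozub--Matsaev condition \VM{} supplies the uniform control needed to run the same lifting mechanism for the $w_m$, producing finite-dimensional subspaces of $\cM$ of arbitrarily large dimension on which $\frt(\lae) \le 0$; the codimension count against $\cL$ then gives \eqref{varinequ_e} for every $n$.

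The main obstacle is the lifting step: the vectors $y_j$ (respectively $w_m$) live in $\cD$ but typically not in $\cM$, and the substitutes $y_j^\sharp \in \cM$ must simultaneously lie in $\cM$, preserve non-positivity of $\frt(\lambda_n)$ (respectively $\frt(\lae)$), and be jointly linearly independent with controlled cross terms $\frt(\lambda_n)[y_j^\sharp,y_k^\sharp]$ for $\lambda_j \ne \lambda_k$. Synchronising these requirements is where the maximality of $\cM$, the invertibility of $T(\gamma)$, and (A3) must all be used in concert.
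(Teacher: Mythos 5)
Your reduction of \eqref{varinequ} to the existence of an $n$-dimensional subspace $\cF\subset\cM$ with $\frt(\lambda_n)\le 0$ on $\cF$ is sound as a codimension count, but the construction of $\cF$ --- the ``lifting step'' you yourself single out as the main obstacle --- is exactly the missing argument, and I do not see how to repair it along the lines you sketch. Writing an eigenvector $u_j$ as $u_j^++u_j^-$ with respect to the decomposition of $\cD$ induced by $T(\gamma)$ and replacing it by $y_j^\sharp=u_j^++Cu_j^+\in\cM$ (with $C$ the angular operator of $\cM$) changes $u_j$ by an element of the $\frt(\gamma)$-negative spectral subspace. Assumption (A3) controls the sign of $\lambda\mapsto\frt(\lambda)[x]$ only for each \emph{fixed} $x$; it gives no relation between $\frt(\lambda_n)[y_j^\sharp]$ and $\frt(\lambda_n)[u_j]$, and neither the diagonal terms nor the cross terms $\frt(\lambda_n)[y_j^\sharp,y_k^\sharp]$ are controlled. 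So the assertion that $y_j^\sharp$ ``inherits the sign behaviour'' of $u_j$ is unsubstantiated, and with it the whole index count $\nu_-(\cM,\lambda_n)\ge n$. Note also that this intermediate claim is strictly stronger than the theorem: \eqref{varinequ} only asserts that the infimum of $p$ is at most $\lambda_n$, not that every $\cL$ admits an orthogonal vector in $\cM$ with $\frt(\lambda_n)[x]\le0$; so you cannot hope to recover your claim from the statement itself.

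The paper's proof avoids the lifting entirely by arguing by contradiction in the opposite direction: if the infimum over $\cM\ominus\cL$ exceeded $\lambda_n$, then $\frt(\lambda_n)>0$ on $(\cM\ominus\cL)\setminus\{0\}$, and by Lemma~\ref{le:05} the space $(\cM\ominus\cL)+\spn\{u_1,\dots,u_n\}$ is $\frt(\gamma)$-non-negative, with the sum direct because $\frt(\lambda_n)\le0$ on $\spn\{u_1,\dots,u_n\}$; its dimension over $\cM\ominus\cL$ is $n$, whereas that of $\cM$ is $n-1$, which contradicts the maximality of $\cM$ by the angular-operator comparison of Lemma~\ref{le:00}. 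The eigenvectors are thus adjoined to a subspace of $\cM$ to build a ``too large'' non-negative subspace, rather than pushed into $\cM$. For \eqref{varinequ_e} the same scheme is run with an $n$-dimensional almost-null subspace $\cV$ for $T(\mu_1)$, $\mu_1\in\sess(T)$, in place of the eigenvectors, Lemmas~\ref{le:02} and~\ref{le:03} (i.e.\ the \VM{} condition) supplying the required $\frt(\gamma)$-non-negativity. Incidentally, your claim that Weyl vectors for $0\in\sess(T(\lae))$ can be chosen with $\frt(\lae)[w_m]\le0$ is false in general --- the essential spectrum may approach $0$ from above only --- and the paper's use of \VM{} is designed precisely to dispense with any sign control on these vectors.
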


\medskip

\begin{remark}\label{re:mainths}
\rule{0ex}{1ex}
\begin{myenum}
\item
The statement in Theorem~\ref{th:resolv} is false without the assumption \VM{}
as can be seen from the following example.  Let $A$ be a self-adjoint operator
in a Hilbert space $\cH$ with spectrum $\sigma(A)=[0,1]$ but having $0$ not as
an eigenvalue. The operator function $T(\lambda)=-\lambda^2 I-A$ satisfies
Assumptions {\rm(A1)--(A3)} since $\frt(\lambda)[x]<0$ for all $x\in\cH\setminus\{0\}$
and $\lambda\in\RR$.
If we choose $\mu_1=-1$ and $\mu_2=0$, then $\M_{-1}^+=\{\{0\}\}$ and therefore
relation \eqref{cond_resolv} is satisfied.  However, $0\in\sigma(T)$.
\item
Note that the variations on the left-hand sides of \eqref{varinequ}
and \eqref{varinequ_e} are over non-empty sets for those $n$ considered there,
i.e.\ there exists an $\cM\in\M_\gamma^+$ which is at least $n$-dimensional;
see the beginning of the proof of Theorem~\ref{th:varinequ}.
\item
Under our assumptions one obtains in general only an inequality and not
equality as the following example shows.  Consider the operator function
$T(\lambda)=\diag(T_1(\lambda),T_2(\lambda),\dots)$, $\lambda\in\Delta=\RR$,
in the space $\cH=\ell^2$ where the piece-wise linear functions $T_k$
are defined as
\[
  T_k(\lambda) = \begin{cases}
    1, & \lambda\le0, \\[0.5ex]
    1-k\lambda, & 0<\lambda<\frac{2}{k}\,, \\[0.5ex]
    -1, & \lambda\ge\frac{2}{k}\,.
  \end{cases}
\]
The spectrum of $T$ consists only of eigenvalues:
\[
  \sigma(T)=\sigma_{\rm p}(T)=\biggl\{\frac{1}{k}:k\in\NN\biggr\},
\]
but the variations on the left-hand side of \eqref{varinequ} are equal to $0$
for all $n\in\NN$ if one chooses, e.g.\ $\gamma=0$.
\item
Note that in the last statement of the theorem the condition \VM{} is necessary
as can be seen from the example given in \cite[Remark~2.10]{EL04}.
\end{myenum}
\end{remark}

\noindent
Before we prove the theorems, we need a couple of lemmas.

\begin{lemma} \label{le:00}
Let $A$ be a self-adjoint operator, $\fra$ the corresponding form, and assume
that $0\in\rho(A)$.  Let $\cM$ be a maximal $\fra$-non-negative subspace
of $\dom(\fra)=\dom(|A|^{1/2})$, $\cM'$ an $\fra$-non-negative subspace
of $\dom(\fra)$ and $\cL\subset\cM\cap\cM'$.  Then
\[
  \dim(\cM/\cL) \ge \dim(\cM'/\cL).
\]
\end{lemma}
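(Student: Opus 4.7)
The plan is to reduce the lemma to a statement about subspaces of a Krein space, obtained from the spectral decomposition of $A$.

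Since $0\in\rho(A)$, we have $|A|\ge c I$ for some $c>0$, and the spectral projections $P_+,P_-$ of $A$ onto $\cL_{(0,\infty)}(A)$ and $\cL_{(-\infty,0)}(A)$ commute with $|A|^{1/2}$; in particular they leave $\dom(\fra)=\dom(|A|^{1/2})$ invariant, and
\[
  \fra[x] = \bigl\||A|^{1/2}P_+x\bigr\|^2 - \bigl\||A|^{1/2}P_-x\bigr\|^2,
  \qquad x\in\dom(\fra).
\]
The map $|A|^{1/2}\colon\dom(\fra)\to\cH$ is a bijection with bounded inverse, so $\dom(\fra)$ equipped with the form norm $\||A|^{1/2}\cdot\|$ is a Hilbert space isometric to $\cH$ via $x\mapsto\wt x\defeq|A|^{1/2}x$. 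Under this isometry, $\fra$ corresponds to the indefinite inner product $\langle J\cdot,\cdot\rangle$ with $J=P_+-P_-$, and $(\dom(\fra),\fra)$ is identified with the Krein space $(\cH,\langle J\cdot,\cdot\rangle)$ whose fundamental decomposition is $\cH=\cH_+\oplus\cH_-$.

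I would then establish two properties. First, for any $\fra$-non-negative subspace $\cN$ of $\dom(\fra)$ the projection $P_+\restr{\cN}$ is injective: $x\in\cN$ with $P_+x=0$ gives $\fra[x]=-\||A|^{1/2}P_-x\|^2\le 0$, so $\fra$-non-negativity forces $P_-x=0$ and hence $x=0$. Second, if $\cM$ is \emph{maximal} $\fra$-non-negative, then $P_+\cM=\cH_+\cap\dom(\fra)$. This is the classical Krein-space fact that maximal non-negative subspaces are graphs of everywhere-defined contractions $T\colon\cH_+\to\cH_-$. To invoke it I would first observe that $\cM$ is closed in the form norm (its form-closure is still $\fra$-non-negative because $|\fra[x,y]|\le\||A|^{1/2}x\|\||A|^{1/2}y\|$, so it equals $\cM$ by maximality); this translates into $\wt\cM$ being a closed, maximal $J$-non-negative subspace of $\cH$, for which $P_+\wt\cM$ is closed and the standard extension-by-a-contraction argument excludes $P_+\wt\cM\subsetneq\cH_+$.

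Combining the two facts concludes the proof: the injectivity of $P_+$ on $\cM$ and $\cM'$ yields
\[
  \dim(\cM/\cL)=\dim\bigl(P_+\cM/P_+\cL\bigr),\qquad
  \dim(\cM'/\cL)=\dim\bigl(P_+\cM'/P_+\cL\bigr);
\]
since $\cM'\subset\dom(\fra)$ one has $P_+\cM'\subset\cH_+\cap\dom(\fra)=P_+\cM$, so the nested inclusion $P_+\cL\subset P_+\cM'\subset P_+\cM$ gives $\dim(P_+\cM'/P_+\cL)\le\dim(P_+\cM/P_+\cL)$, which is the desired inequality.

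The main obstacle is the surjectivity statement $P_+\cM=\cH_+\cap\dom(\fra)$: it rests on both the maximality of $\cM$ and the classical structure theorem for maximal non-negative subspaces in a Krein space, and requires the preliminary observation that a maximal $\fra$-non-negative subspace is automatically closed in the form norm.
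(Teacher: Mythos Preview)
Your proof is correct and follows essentially the same route as the paper: both exploit that $0\in\rho(A)$ makes $(\dom(\fra),\fra)$ a Krein space with fundamental decomposition coming from the spectral projections of $A$, and both reduce the statement to the angular-operator/graph representation of (maximal) non-negative subspaces. Your presentation via injectivity and surjectivity of $P_+$ is exactly the content of the angular-operator representation the paper cites from \cite{L82}, and you spell out the quotient-dimension comparison that the paper leaves as ``immediate''.
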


\begin{proof}
Since $0\in\rho(A)$, $\cK\defeq\dom(\fra)$ is a Krein space with inner product
$\fra[\cdot\,,\cdot]$, i.e.\ it is a direct and orthogonal sum of the Hilbert
space $\cK_+=\cL_{(0,\infty)}(A)\cap\dom(\fra)$ and the anti-Hilbert space
$\cK_-=\cL_{(-\infty,0)}(A)\cap\dom(\fra)$.
According to \cite[Proposition~I.1.1]{L82} and its first corollary,
$\cM$ and $\cM'$ have angular operator representations, i.e., with respect to
the decomposition $\cK=\cK_+\,\dot{+}\,\cK_-$, they can be written as
\begin{equation}\label{angular}
  \cM = \biggl\{\binom{x}{C_{\cM}x}: x\in\cK_+\biggr\}, \qquad
  \cM' = \biggl\{\binom{x}{C_{\cM'}x}: x\in\dom(C_{\cM'})\biggr\},
\end{equation}
where $C_\cM$ and $C_{\cM'}$ are bounded operators from $\cK_+$ to $\cK_-$ with
$\dom(C_\cM)=\cK_+$ and $\dom(C_{\cM'})\subset\cK_+$.  This implies that $\cM$ is isomorphic
to $\cK_+$ and $\cM'$ is isomorphic to a subspace of $\cK_+$.
From this the claim is immediate.
\end{proof}

\begin{lemma} \label{le:01}
Let $\fra$ be a quadratic form with domain $\dom(\fra)$ corresponding to a
self-adjoint operator $A$.  Let $u\in\dom(\fra)$, $v\in\dom(A)$ and
$a,b,c>0$ such that
\[
  \fra[u] \ge a\|u\|^2, \qquad \|Av\| \le b\|v\|
\]
and
\[
  ac>b(a+b+3c).
\]
If\, $u\ne0$ or $v\ne0$, then
\[
  \fra[u+v] + c\|u+v\|^2 > 0.
\]
\end{lemma}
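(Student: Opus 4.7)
The plan is to expand $\fra[u+v]+c\|u+v\|^2$ via sesquilinearity, use $v\in\dom(A)$ to convert the cross terms and the pure $v$ term into bona fide inner products involving $Av$, and then recognise the resulting lower bound as a positive definite quadratic form in $(\|u\|,\|v\|)$.

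More concretely, I would first write
\[
  \fra[u+v]+c\|u+v\|^2 = \bigl(\fra[u]+c\|u\|^2\bigr) + 2\Re\fra[u,v]+2c\Re\langle u,v\rangle + \bigl(\fra[v]+c\|v\|^2\bigr).
\]
Since $v\in\dom(A)$ and $u\in\dom(\fra)$, the identity $\fra[y,x]=\langle Ay,x\rangle$ for $y\in\dom(A)$, $x\in\dom(\fra)$ (recorded just after the definition of $\fra$) gives $\fra[u,v]=\overline{\fra[v,u]}=\langle u,Av\rangle$ and $\fra[v]=\langle Av,v\rangle$. Combining the hypothesis $\fra[u]\ge a\|u\|^2$ with Cauchy--Schwarz and $\|Av\|\le b\|v\|$, I would obtain
\[
  \fra[u]+c\|u\|^2\ge(a+c)\|u\|^2, \quad \fra[v]+c\|v\|^2\ge(c-b)\|v\|^2,
\]
\[
  2\Re\fra[u,v]+2c\Re\langle u,v\rangle \ge -2(b+c)\,\|u\|\|v\|.
\]
Summing yields
\[
  \fra[u+v]+c\|u+v\|^2 \ge (a+c)\|u\|^2 - 2(b+c)\|u\|\|v\| + (c-b)\|v\|^2 \eqdef Q(\|u\|,\|v\|).
\]

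Finally, I would check that the symmetric bilinear form $Q$ on $\RR^2$ is positive definite: the $(1,1)$-entry $a+c$ is positive, and the determinant is
\[
  (a+c)(c-b)-(b+c)^2 = ac - b(a+b+3c),
\]
which is strictly positive by the assumption $ac>b(a+b+3c)$. Hence $Q(s,t)>0$ for every $(s,t)\ne(0,0)$, and in particular $Q(\|u\|,\|v\|)>0$ whenever $u\ne0$ or $v\ne0$, giving the claim.

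The argument is essentially a routine computation; there is no serious obstacle. The only point to watch is that one genuinely needs $v\in\dom(A)$ (not merely $v\in\dom(\fra)$) in order to rewrite $\fra[u,v]$ and $\fra[v]$ via $Av$ and exploit the norm bound $\|Av\|\le b\|v\|$; this is what delivers the sharp coefficient $b$ (rather than something like $\fra[v]^{1/2}$) and makes the discriminant condition match the hypothesis exactly.
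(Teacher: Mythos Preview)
Your proof is correct and follows essentially the same approach as the paper: expand $\fra[u+v]+c\|u+v\|^2$, use $v\in\dom(A)$ to rewrite the cross terms and $\fra[v]$ as inner products with $Av$, estimate via Cauchy--Schwarz to reach the quadratic form $(a+c)\|u\|^2-2(b+c)\|u\|\|v\|+(c-b)\|v\|^2$, and then check positive definiteness via the discriminant condition $(a+c)(c-b)-(b+c)^2=ac-b(a+b+3c)>0$.
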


\begin{proof}
We can estimate
\begin{align}
  &\fra[u+v] + c\|u+v\|^2 \notag\\[1ex]
  &= \fra[u] + 2\Re\langle Av,u\rangle + \langle Av,v\rangle
  + c\|u\|^2 + 2c\Re\langle v,u\rangle + c\|v\|^2 \notag\\[1ex]
  &\ge \fra[u] - 2\|Av\|\,\|u\| - \|Av\|\,\|v\|
  + c\|u\|^2 - 2c\|u\|\,\|v\| + c\|v\|^2 \notag\\[1ex]
  &\ge a\|u\|^2 - 2b\|v\|\,\|u\| - b\|v\|^2
  + c\|u\|^2 - 2c\|u\|\,\|v\| + c\|v\|^2 \notag\\[1ex]
  &= (a+c)\|u\|^2 - 2(b+c)\|u\|\,\|v\| + (c-b)\|v\|^2. \label{T101}
\end{align}
Since $a+c>0$, the quadratic form in $\|u\|$ and $\|v\|$ is positive definite
if and only if
\[
  (a+c)(c-b)-(b+c)^2>0,
\]
which is equivalent to
\[
  ac > b(a+b+3c).
\]
As this inequality is true by assumption, the expression in \eqref{T101}
is positive unless both $\|u\|$ and $\|v\|$ are zero.
\end{proof}

\medskip

In the next lemmas $T$ is an operator function defined on an interval $\Delta$.

\begin{lemma} \label{le:02}
Assume that $T$ satisfies {\rm(A1)--(A3)} and \VM.
Let $\mu_1,\mu_2\in\Delta$ with $\mu_1<\mu_2$ and let
$\eps$ and $\delta$ be such that \eqref{VMepsdelta} is valid for all
$\lambda\in[\mu_1,\mu_2]$ and $x\in\cD$.
Then
\[
  \frt(\mu_2)[x] \ge -\eps\|x\|^2 \quad\Longrightarrow\quad
  \frt(\mu_1)[x] \ge \min\bigl\{\eps\|x\|^2,\,
  \frt(\mu_2)[x]+\delta(\mu_2-\mu_1)\|x\|^2\bigr\}
\]
for all $x\in\cD$.
\end{lemma}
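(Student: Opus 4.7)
The plan is to reduce to a scalar one-variable problem by fixing $x$ and studying the function $f\colon[\mu_1,\mu_2]\to\RR$, $f(\lambda)\defeq\frt(\lambda)[x]$. Assumption (A3) gives continuity of $f$, and the differentiability part of \VM{} gives differentiability of $f$ on $\Delta$, so standard one-variable tools (the intermediate value theorem and the mean value theorem) apply. Normalising to $\|x\|=1$ (the case $x=0$ is trivial, and both sides scale with $\|x\|^2$), the statement becomes: if $f(\mu_2)\ge-\eps$, then $f(\mu_1)\ge\min\{\eps,\,f(\mu_2)+\delta(\mu_2-\mu_1)\}$.

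I would first dispose of the trivial case $f(\mu_1)\ge\eps$, where the minimum is at most $\eps\le f(\mu_1)$ automatically. So assume henceforth $f(\mu_1)<\eps$ (and $f(\mu_2)\ge-\eps$). The heart of the argument is to establish the \emph{a priori} bound $|f(\lambda)|\le\eps$ on all of $[\mu_1,\mu_2]$.

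To show $f(\lambda)\le\eps$ throughout, suppose not, and let $\lambda^{**}\defeq\inf\{\lambda\in[\mu_1,\mu_2]:f(\lambda)>\eps\}$. Since $f(\mu_1)<\eps$, continuity forces $f(\lambda^{**})=\eps$, so in particular $|f(\lambda^{**})|\le\eps$, and by \eqref{VMepsdelta} applied at $\lambda=\lambda^{**}$ we get $f'(\lambda^{**})\le-\delta<0$. However, by definition of $\lambda^{**}$ there is a sequence $\lambda_n\downarrow\lambda^{**}$ with $f(\lambda_n)>\eps=f(\lambda^{**})$, which forces the right-hand derivative of $f$ at $\lambda^{**}$ to be non-negative, a contradiction. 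The argument that $f(\lambda)\ge-\eps$ throughout is symmetric: set $\lambda^*\defeq\sup\{\lambda\in[\mu_1,\mu_2]:f(\lambda)<-\eps\}$; continuity and $f(\mu_2)\ge-\eps$ give $f(\lambda^*)=-\eps$, so \eqref{VMepsdelta} yields $f'(\lambda^*)\le-\delta<0$, yet $f(\lambda)\ge-\eps=f(\lambda^*)$ for $\lambda>\lambda^*$ forces the right-hand derivative to be non-negative, again a contradiction.

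Once $|f(\lambda)|\le\eps$ is established on all of $[\mu_1,\mu_2]$, condition \VM{} gives $f'(\lambda)\le-\delta$ on the whole interval, and the mean value theorem yields
\[
  f(\mu_2)-f(\mu_1) \le -\delta(\mu_2-\mu_1),
\]
so $f(\mu_1)\ge f(\mu_2)+\delta(\mu_2-\mu_1)\ge\min\{\eps,\,f(\mu_2)+\delta(\mu_2-\mu_1)\}$, which is the desired inequality. The only delicate step is the pair of IVT/extremum arguments establishing the two-sided bound on $f$; everything else is a direct application of \VM{} together with the mean value theorem.
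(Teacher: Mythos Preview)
Your proof is correct and follows essentially the same route as the paper: normalise to $\|x\|=1$, dispose of the case $f(\mu_1)\ge\eps$, establish the two-sided bound $|f|\le\eps$ on $[\mu_1,\mu_2]$, and then use $f'\le-\delta$ to compare $f(\mu_1)$ with $f(\mu_2)$ (the paper integrates $\frt'(\lambda)[x]$ rather than citing the mean value theorem, which is the same thing). The only cosmetic difference is that the paper packages the two-sided bound as two brief monotonicity observations (``if $f(\lambda_0)\ge\eps$ then $f\ge\eps$ on $[\mu_1,\lambda_0]$'', and symmetrically), whereas you spell out the IVT/derivative-sign contradiction; one tiny wrinkle in your lower-bound step is the edge case $\lambda^*=\mu_2$, where there is no right-hand side---there just use the approximating sequence $\lambda_n\uparrow\lambda^*$ from $\{f<-\eps\}$ to get the same contradiction with $f'(\lambda^*)<0$.
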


\begin{proof}
Without loss of generality we may assume that $\|x\|=1$.
If $\frt(\lambda_0)[x]\ge\eps$ for some $\lambda_0\in[\mu_1,\mu_2]$, then, clearly,
$\frt(\lambda)[x]\ge\eps$ for all $\lambda\in[\mu_1,\lambda_0]$;
if $\frt(\lambda_0)[x]\le-\eps$ for some $\lambda_0\in[\mu_1,\mu_2]$,
then $\frt(\lambda)[x]\le-\eps$ for all $\lambda\in[\lambda_0,\mu_2]$.
Now, if $\frt(\mu_1)[x]\ge\eps$, then there is nothing to prove.
Otherwise, $\frt(\lambda)[x]\in[-\eps,\eps]$ for all $\lambda\in[\mu_1,\mu_2]$ and
therefore $\frt'(\lambda)[x]\le-\delta$ for such $\lambda$.  Hence
\[
  \frt(\mu_1)[x] = \frt(\mu_2)[x]-\int_{\mu_1}^{\mu_2} \frt'(\lambda)[x]\,\rd\lambda
  \ge \frt(\mu_2)[x]+\delta(\mu_2-\mu_1),
\]
which shows the assertion.
\end{proof}

\begin{lemma} \label{le:03}
Assume that $T$ satisfies {\rm(A1)--(A3)} and \VM.
Let $\mu_1,\mu_2\in\Delta$ with $\mu_1<\mu_2$ and
let $\eps$ and $\delta$ be such that \eqref{VMepsdelta} is valid for all
$\lambda\in[\mu_1,\mu_2]$ and $x\in\cD$.
Moreover, let $a,b>0$, set $c\defeq\min\{\eps,\,\delta(\mu_2-\mu_1)\}$
and suppose that
\[
  ac > b(a+b+3c).
\]
If $u\in\cD$, $v\in\dom(T(\mu_2))$ are such that
\[
  \frt(\mu_2)[u] \ge a\|u\|^2, \qquad \|T(\mu_2)v\| \le b\|v\|
\]
and $u\ne0$ or $v\ne0$, then
\[
  \frt(\mu_1)[u+v] > 0.
\]
\end{lemma}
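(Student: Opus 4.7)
The plan is to combine Lemmas~\ref{le:01} and \ref{le:02}: first use Lemma~\ref{le:01} at the endpoint $\mu_2$ to obtain a strict lower bound of the form $\frt(\mu_2)[u+v] > -c\|u+v\|^2$, and then propagate this bound from $\mu_2$ back to $\mu_1$ via Lemma~\ref{le:02} (the \VM-condition), whose estimate is designed precisely for vectors with small negative form value.

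More concretely, I would first apply Lemma~\ref{le:01} to the sesquilinear form $\fra=\frt(\mu_2)$, which corresponds to the self-adjoint operator $T(\mu_2)$ by (A1), and to the vectors $u\in\cD=\dom(\frt(\mu_2))$ and $v\in\dom(T(\mu_2))$, with the given $a,b,c$. The hypotheses $\frt(\mu_2)[u]\ge a\|u\|^2$, $\|T(\mu_2)v\|\le b\|v\|$ and $ac>b(a+b+3c)$ are exactly those of Lemma~\ref{le:01}, so it yields
\[
  \frt(\mu_2)[u+v] + c\|u+v\|^2 > 0.
\]
Observe that this strict inequality forces $u+v\ne0$, since otherwise both terms on the left would vanish; hence dividing (conceptually) by $\|u+v\|^2$ is meaningful.

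Next I would apply Lemma~\ref{le:02} with $x=u+v$. Since $c\le\eps$ by definition, the previous step gives $\frt(\mu_2)[u+v] > -c\|u+v\|^2 \ge -\eps\|u+v\|^2$, so the hypothesis of Lemma~\ref{le:02} is satisfied and therefore
\[
  \frt(\mu_1)[u+v] \;\ge\; \min\bigl\{\eps\|u+v\|^2,\; \frt(\mu_2)[u+v]+\delta(\mu_2-\mu_1)\|u+v\|^2\bigr\}.
\]
Now use $c\le\delta(\mu_2-\mu_1)$ to replace $\delta(\mu_2-\mu_1)$ by $c$ from below in the second term: it is bounded below by $\frt(\mu_2)[u+v]+c\|u+v\|^2>0$ by Step~1. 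The first term is bounded below by $c\|u+v\|^2>0$. Both alternatives in the minimum are strictly positive, so $\frt(\mu_1)[u+v]>0$, as required.

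The only potentially delicate points are bookkeeping of the constants (checking that the same $c$ appearing in Lemma~\ref{le:01} is consistent with the bound $c\le\min\{\eps,\delta(\mu_2-\mu_1)\}$ that makes Lemma~\ref{le:02} applicable) and verifying $u+v\ne0$, both of which are handled above. Everything else is a direct citation of the two preceding lemmas, so no genuinely new analytic estimate is needed.
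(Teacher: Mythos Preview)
Your proof is correct and follows essentially the same route as the paper's own argument: apply Lemma~\ref{le:01} at $\mu_2$ to obtain $\frt(\mu_2)[u+v]+c\|u+v\|^2>0$ (hence $u+v\ne0$), then use $c\le\eps$ to invoke Lemma~\ref{le:02} and check that both terms in the resulting minimum are strictly positive via $c\le\delta(\mu_2-\mu_1)$. The handling of constants and the verification that $u+v\ne0$ match the paper exactly.
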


\begin{proof}
It follows from Lemma~\ref{le:01} applied to $\fra=\frt(\mu_2)$ that
\begin{equation} \label{T102}
  \frt(\mu_2)[u+v]+c\|u+v\|^2 > 0.
\end{equation}
Since $c\le\eps$, we have $\frt(\mu_2)[u+v] \ge -\eps\|u+v\|^2$.
Now Lemma~\ref{le:02} implies that
\[
  \frt(\mu_1)[u+v] \ge \min\bigl\{\eps\|u+v\|^2,\,
  \frt(\mu_2)[u+v]+\delta(\mu_2-\mu_1)\|u+v\|^2\bigr\}.
\]
The first expression in the minimum is positive because $u+v\ne0$ by \eqref{T102}.
The second expression in the minimum is also positive:
\[
  \frt(\mu_2)[u+v]+\delta(\mu_2-\mu_1)\|u+v\|^2
  > -c\|u+v\|^2 + \delta(\mu_2-\mu_1)\|u+v\|^2 \ge 0
\]
by the definition of $c$, which implies the assertion.
\end{proof}

\begin{lemma} \label{le:04}
Assume that $T$ satisfies {\rm(A1)--(A3)} and \VM.
Let $a>0$, $\mu_1,\mu_2\in\Delta$ with $\mu_1<\mu_2$, and let $\cM$ be a
subspace of $\cD$.
Moreover, suppose that $\mu_2\in\sigma(T)$ and that
\begin{equation} \label{T103}
  \frt(\mu_2)[x] \ge a\|x\|^2 \qquad \text{for all } x\in\cM.
\end{equation}
Then there exists a subspace $\cM'$ such that $\cM \subsetneq \cM' \subset \cD$ and
\[
  \frt(\mu_1)[x] > 0 \qquad\text{for all } x\in\cM',\,x\ne0.
\]
\end{lemma}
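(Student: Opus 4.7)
The plan is to enlarge $\cM$ by a single approximate null vector of $T(\mu_2)$ and then to deduce strict positivity of $\frt(\mu_1)$ on the enlarged subspace via Lemma~\ref{le:03}. Concretely, I would fix $\eps,\delta>0$ so that \eqref{VMepsdelta} holds throughout $[\mu_1,\mu_2]$, set $c\defeq\min\{\eps,\delta(\mu_2-\mu_1)\}$, and choose $b>0$ so small that $b<a$ and $ac>b(a+b+3c)$; both conditions are open near $0$. Since $\mu_2\in\sigma(T)$ means $0\in\sigma(T(\mu_2))$ and $T(\mu_2)$ is self-adjoint, an approximate eigenvector argument (or an honest eigenvector if $0\in\sigma_{\rm p}(T(\mu_2))$) furnishes a non-zero $v\in\dom(T(\mu_2))\subset\cD$ with $\|T(\mu_2)v\|\le b\|v\|$. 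A direct check using \eqref{T103} and the Cauchy--Schwarz inequality excludes $v\in\cM$: otherwise $a\|v\|^2\le\frt(\mu_2)[v]=\langle T(\mu_2)v,v\rangle\le b\|v\|^2$, contradicting $b<a$. Hence $\cM'\defeq\cM\dotplus\spn\{v\}$ is a subspace of $\cD$ strictly containing $\cM$.

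It then remains to show $\frt(\mu_1)[x]>0$ for every $x=u+\alpha v\in\cM'\setminus\{0\}$ with $u\in\cM$ and $\alpha\in\CC$. If $\alpha=0$, then $u\ne 0$ and $\frt(\mu_2)[u]\ge a\|u\|^2>0$; by (A3), $\frt(\cdot)[u]$ cannot have a zero in $[\mu_1,\mu_2]$ (such a zero would force $\frt(\mu_2)[u]\le 0$), so $\frt(\mu_1)[u]>0$ follows from continuity and the intermediate value theorem. If $\alpha\ne 0$, I rescale: $u/\alpha\in\cM$ still satisfies $\frt(\mu_2)[u/\alpha]\ge a\|u/\alpha\|^2$, and $v\ne 0$, so Lemma~\ref{le:03} applied to the pair $(u/\alpha,v)$ yields $\frt(\mu_1)[u/\alpha+v]>0$ and hence $\frt(\mu_1)[x]=|\alpha|^2\frt(\mu_1)[u/\alpha+v]>0$.

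The only delicate ingredient is the calibration of $b$: it must be small enough both to satisfy the algebraic condition $ac>b(a+b+3c)$ of Lemma~\ref{le:03} and to lie strictly below $a$, so that the adjoined vector $v$ is forced outside $\cM$. The existence of such a $v$ is then a direct consequence of the spectral theorem for the self-adjoint operator $T(\mu_2)$ combined with $0\in\sigma(T(\mu_2))$, and the rest is a bookkeeping argument splitting into the cases $\alpha=0$ and $\alpha\ne 0$.
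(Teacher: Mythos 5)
Your proposal is correct and follows essentially the same route as the paper: choose $b<a$ with $ac>b(a+b+3c)$, adjoin an approximate null vector $v$ of $T(\mu_2)$ (which lies outside $\cM$ because $b<a$), and invoke Lemma~\ref{le:03} to get strict positivity of $\frt(\mu_1)$ on $\cM+\spn\{v\}$. Your case split at $\alpha=0$ is harmless but unnecessary, since Lemma~\ref{le:03} already covers the case $v=0$, $u\ne0$.
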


\begin{proof}
Let $\eps$ and $\delta$ be such that \eqref{VMepsdelta} is valid for all
$\lambda\in[\mu_1,\mu_2]$ and $x\in\cD$.
Set $c\defeq\min\{\eps,\delta(\mu_2-\mu_1)\}$ and
choose a positive number $b$ such that $b<a$ and $ac>b(a+b+3c)$.
Since $\mu_2\in\sigma(T)$, there exists a $v_0\in\dom(T(\mu_2))\subset\cD$ such
that $\|v_0\|=1$ and $\|T(\mu_2)v_0\| \le b$.  Set $\cM' \defeq \cM+\spn\{v_0\}$.
The space $\cM'$ is strictly larger than $\cM$ because $b<a$ and \eqref{T103}
is satisfied.  Now let $u\in\cM$ and $v\in\spn\{v_0\}$.  Then
$\frt(\mu_1)[u+v] > 0$ if $u+v\ne0$ by Lemma~\ref{le:03}.
\end{proof}

\medskip

Theorem~\ref{th:resolv} is now an immediate consequence of the previous lemma.

\begin{proof}[Proof of Theorem~\ref{th:resolv}]
If $\mu_2\in\sigma(T)$, then, by Lemma~\ref{le:04}, there exists $\cM'\subset\cD$,
$\cM\subsetneq\cM'$ such that $\frt(\mu_1)[x]\ge0$ for all $x\in\cM'$,
which contradicts the maximality of $\cM$ as a $\frt(\mu_1)$-non-negative
subspace.
\end{proof}

Before we prove Theorem~\ref{th:varinequ} we need two more lemmas.

\begin{lemma} \label{le:05}
Assume that $T$ satisfies {\rm(A1)--(A3)}.
Let $\lambda_1,\dots,\lambda_m$ be eigenvalues of\, $T$ with eigenvectors
$u_1,\dots,u_m$ and let $\mu,\nu\in\Delta$ such that
$\mu\le\lambda_1\le\dots\le\lambda_m\le\nu$.
Moreover, let $y\in\cD$ and $c_1,\dots,c_m\in\CC$.
\begin{myenum}
\item
If\, $\frt(\nu)[y]\ge0$, then
\[
  \frt(\mu)[y+c_1u_1+\ldots+c_m u_m] \ge 0.
\]
\item
If\, $\frt(\mu)[y]\le0$, then
\[
  \frt(\nu)[y+c_1u_1+\ldots+c_m u_m] \le 0.
\]
\end{myenum}
\end{lemma}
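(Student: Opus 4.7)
My plan is to prove (i) by induction on $m$, leveraging two elementary observations.

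The first observation is a direct consequence of (A3): for any $x\in\cD$, if $\frt(\nu)[x]\ge 0$ and $\mu\le\nu$, then $\frt(\mu)[x]\ge 0$, because $\lambda\mapsto\frt(\lambda)[x]$ is continuous and, by the sign-change rule, can pass from positive to negative only as $\lambda$ crosses its (at most one) zero. The second observation is that each eigenvector $u_i\in\dom(T(\lambda_i))$ is totally isotropic for the form $\frt(\lambda_i)$: the identity
\[
  \frt(\lambda_i)[u_i,v] = \langle T(\lambda_i)u_i,v\rangle = 0
\]
holds for every $v\in\cD$, so $u_i$ contributes neither a diagonal nor a cross term to $\frt(\lambda_i)[\,\cdot\,]$. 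The base case $m=0$ of (i) is exactly the first observation.

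For the inductive step, I apply the induction hypothesis to the sum $y+c_2u_2+\ldots+c_mu_m$ on the sub-interval $[\lambda_1,\nu]$, using the $m-1$ eigenvalues $\lambda_2\le\ldots\le\lambda_m$ lying in that interval, to obtain
\[
  \frt(\lambda_1)[y+c_2u_2+\ldots+c_mu_m]\ge 0.
\]
Expanding $\frt(\lambda_1)[y+c_1u_1+c_2u_2+\ldots+c_mu_m]$ and applying the isotropy of $u_1$ at $\lambda_1$ to eliminate every term containing $u_1$, I deduce
\[
  \frt(\lambda_1)[y+c_1u_1+\ldots+c_mu_m]\ge 0.
\]
The first observation then transports this inequality from $\lambda_1$ down to $\mu$, completing the step.

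Part (ii) follows from the symmetric induction: I strip off the largest eigenvector $u_m$ first, apply the hypothesis on $[\mu,\lambda_m]$ to the sum $y+c_1u_1+\ldots+c_{m-1}u_{m-1}$, exploit the isotropy of $u_m$ at $\lambda_m$ to insert the $c_mu_m$ term without changing the form, and finally transport the non-positive inequality upward from $\lambda_m$ to $\nu$ via the analogue of the first observation. I do not foresee any serious obstacle; the only point requiring a moment of care is choosing the correct endpoint eigenvector to peel off at each inductive step --- the smallest for (i) and the largest for (ii) --- so that the remaining eigenvalues still lie in the shorter interval used by the induction hypothesis.
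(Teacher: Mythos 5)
Your proposal is correct and is essentially the paper's own argument: the authors also descend from $\nu$ through the eigenvalues one at a time, use $\frt(\lambda_i)[u_i,\cdot]=\langle T(\lambda_i)u_i,\cdot\rangle=0$ to absorb each $c_iu_i$ without changing the form, and invoke (A3) to transport sign between consecutive points; they simply write ``repeating this argument'' where you package the same steps as a formal induction. Your treatment of (ii) likewise matches their ``proved analogously.''
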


\begin{proof}
We prove only the assertion in (i); the statement in (ii) is proved analogously.

Since $\frt(\nu)[y]\ge0$ and $T$ satisfies Assumption (A3), we have
$\frt(\lambda_m)[y]\ge0$.
Using the fact that $\lambda_m$ is an eigenvalue of $T$ with eigenvector $u_m$,
i.e.\ that $T(\lambda_m)u_m=0$, we obtain
\begin{align*}
  \frt(\lambda_m)[y+c_m u_m]
  &= \frt(\lambda_m)[y] + 2\Re\bigl\langle c_m T(\lambda_m)u_m,y\bigr\rangle
  + |c_m|^2\bigl\langle T(\lambda_m)u_m,u_m\bigr\rangle \\[1ex]
  &= \frt(\lambda_m)[y] \ge 0
\end{align*}
and hence, again by Assumption (A3), $\frt(\lambda_{m-1})[y+c_m u_m] \ge 0$.
Repeating this argument we get
\[
  \frt(\lambda_1)[y+c_1u_1+\dots+c_m u_m] \ge0.
\]
Finally, we can once more use Assumption (A3) to prove the claim.
\end{proof}

\begin{lemma} \label{le:06}
Assume that $T$ satisfies {\rm(A1)--(A3)}.
Let $\lambda_1\le\lambda_2\le\dots\le\lambda_m$ be eigenvalues of\, $T$
such that, for each set of $k$ coinciding eigenvalues, say
$\lambda_i=\lambda_{i+1}=\ldots=\lambda_{i+k-1}$, one has $\dim\ker T(\lambda_i)\ge k$.
Then there exist linearly independent vectors $u_1,\dots,u_m$ such that $u_j$
is an eigenvector of $T$ corresponding to $\lambda_j$, $j=1,\dots,m$.
\end{lemma}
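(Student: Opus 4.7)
The plan is to group the eigenvalues by their distinct values and then use Assumption (A3) together with Lemma~\ref{le:05}\,(ii) to prove that the corresponding kernels of $T$ form a direct sum.

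First, I would write the distinct values appearing in the list $\lambda_1\le\dots\le\lambda_m$ as $\mu_1<\mu_2<\dots<\mu_r$, with multiplicities $k_1,\dots,k_r$ satisfying $k_1+\dots+k_r=m$. The hypothesis $\dim\ker T(\mu_s)\ge k_s$ allows me to pick $u_{s,1},\dots,u_{s,k_s}\in\ker T(\mu_s)$ that are linearly independent inside $\ker T(\mu_s)$. Relisting these as $u_1,\dots,u_m$ in increasing order of eigenvalue produces the candidate eigenvectors, and global linear independence reduces to showing that the sum $\ker T(\mu_1)+\dots+\ker T(\mu_r)$ is direct.

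To prove directness, suppose $w_1+\dots+w_r=0$ with $w_s\in\ker T(\mu_s)$, and for contradiction let $s_0$ be the largest index with $w_{s_0}\ne 0$; necessarily $s_0\ge 2$, since otherwise $w_1=0$ is immediate. Then $v\defeq\sum_{s<s_0}w_s=-w_{s_0}$ is nonzero and lies in $\ker T(\mu_{s_0})$, so $\frt(\mu_{s_0})[v]=0$, and Assumption~(A3) applied to $v$ forces $\frt(\mu_{s_0-1})[v]>0$ because $\mu_{s_0-1}<\mu_{s_0}$. On the other hand, $v$ is a linear combination (with coefficient $1$) of the nonzero $w_s$ for $s<s_0$, each of which is an eigenvector of $T$ corresponding to some $\mu_s\le\mu_{s_0-1}$. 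Lemma~\ref{le:05}\,(ii) applied with $y=0$ (noting $\frt(\mu_1)[0]=0\le 0$), $\mu=\mu_1$, and $\nu=\mu_{s_0-1}$ therefore yields $\frt(\mu_{s_0-1})[v]\le 0$, contradicting the strict positivity just established. Hence every $w_s$ vanishes, so the sum is direct and the full collection $\{u_{s,j}\}$ is linearly independent.

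The only real obstacle is recognising that Lemma~\ref{le:05}\,(ii) is the right tool: with $y=0$ it provides, essentially for free, the sign control $\frt(\nu)[\sum c_iu_i]\le 0$ on any linear combination of eigenvectors whose corresponding eigenvalues do not exceed $\nu$, which is exactly what is needed to contradict the strict sign coming from Assumption~(A3) at the level of $v$.
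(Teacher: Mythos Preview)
Your proof is correct and follows essentially the same route as the paper: both arguments pick eigenvectors that are independent within each kernel, assume a nontrivial relation, single out the largest eigenvalue occurring in that relation, and then combine Lemma~\ref{le:05}\,(ii) (with $y=0$) and Assumption~(A3) to produce contradictory signs for $\frt$ at an intermediate eigenvalue. The only cosmetic difference is that you phrase it as directness of $\sum_s\ker T(\mu_s)$, whereas the paper works directly with the chosen eigenvectors $u_1,\dots,u_m$.
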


\begin{proof}
For every $\lambda_j$ choose an eigenvector $u_j$ such that for coinciding
eigenvalues the eigenvectors are linearly independent.
Assume that there exist numbers $\alpha_1,\dots,\alpha_m\in\CC$,
not all equal to $0$, such that
\[
  \alpha_1 u_1+\ldots+\alpha_m u_m = 0.
\]
Let $\alpha_n$ be the last non-zero coefficient, i.e.\ $\alpha_n\ne0$ and
\[
  \alpha_1 u_1+\ldots+\alpha_n u_n = 0.
\]
Because the $u_j$ are chosen to be linearly independent for coinciding
eigenvalues, we have $\lambda_1<\lambda_n$.  Let $k$ be such that
\[
  \lambda_k < \lambda_{k+1} = \ldots = \lambda_n.
\]
Since $u_{k+1},\dots,u_n$ are linearly independent and $\alpha_n\ne0$,
it follows that
\[
  \alpha_1 u_1 + \ldots + \alpha_k u_k
  = -(\alpha_{k+1}u_{k+1}+\ldots+\alpha_n u_n) \ne 0.
\]
From Lemma~\ref{le:05}\,(ii) with $y=0$ we obtain that
\begin{equation} \label{T108}
  \frt(\lambda_k)[\alpha_1 u_1 + \ldots + \alpha_k u_k] \le 0.
\end{equation}
The fact that $\alpha_{k+1}u_{k+1}+\ldots+\alpha_n u_n$ is an
eigenvector to the eigenvalue $\lambda_n$ implies that
$\frt(\lambda_n)[\alpha_{k+1}u_{k+1}+\ldots+\alpha_n u_n]=0$.  Hence
\begin{align*}
  0 &= \frt(\lambda_n)[\alpha_{k+1}u_{k+1}+\ldots+\alpha_n u_n]
  = \frt(\lambda_n)[\alpha_1 u_1 + \ldots + \alpha_k u_k] \\[1ex]
  &<\frt(\lambda_k)[\alpha_1 u_1 + \ldots + \alpha_k u_k]
\end{align*}
by (A3), which is a contradiction to \eqref{T108}.
\end{proof}

Note that, without assumption (A3), the statement of the previous lemma is false
in general; see, e.g.\ the example in \cite[Remark~7.7]{LMM06}.

\medskip

Now we can turn to the proof of Theorem~\ref{th:varinequ}.

\begin{proof}[Proof of Theorem~\ref{th:varinequ}]
First we show Remark~\ref{re:mainths}\,(ii).  Let $n\in\NN$ and assume that $T$
has at least $n$ eigenvalues in $(\gamma,\lae)$ counted with multiplicities.
It follows from Lemma~\ref{le:06} that
there exist linearly independent eigenvectors $u_1,\dots,u_n$ of $T$ corresponding
to $\lambda_1,\dots,\lambda_n$.  By Lemma~\ref{le:05}\,(i) the space
$\spn\{u_1,\dots,u_n\}$ is $\frt(\gamma)$-non-negative and can be extended to
a maximal $\frt(\gamma)$-non-negative subspace by Zorn's lemma, which shows
the statement concerning \eqref{varinequ}.
For the analogous statement for \eqref{varinequ_e} let
$\mu_2\in\sess(T)\cap(\gamma,\beta)$ and $a,b,c$ as in Lemma~\ref{le:03}
where $\eps,\delta$ are such that \eqref{VMepsdelta} is valid on $[\gamma,\mu_2]$.
If $n\in\NN$, then there exists an $n$-dimensional subspace $\cM'$ of $\dom(T(\mu_2))$
such that $\|T(\mu_2)v\|\le b\|v\|$ for $v\in\cM'$.  It follows from Lemma~\ref{le:03}
with $u=0$ that the space $\cM'$ is $\frt(\gamma)$-non-negative.
Again we can extend this space to a maximal $\frt(\gamma)$-non-negative subspace.

Suppose that the inequality in \eqref{varinequ} is false for some $n$.
Then there exist an $\cM\in\M_\gamma^+$ and a subspace $\cL\subset\cM$
with $\dim\cL=n-1$ such that
\[
  \inf_{\substack{x\in\cM\setminus\{0\} \\[0.2ex] x\perp\cL}} p(x) > \lambda_n.
\]
Hence
\begin{equation} \label{T107}
  \frt(\lambda_n)[y] > 0, \qquad y\in\cM\ominus\cL,\,y\ne0.
\end{equation}
By Lemma~\ref{le:06} there exist linearly independent eigenvectors $u_1,\dots,u_n$
corresponding to the eigenvalues $\lambda_1,\dots,\lambda_n$.
According to Lemma~\ref{le:05}\,(i) we have
\[
  \frt(\gamma)[y+c_1u_1+\dots+c_n u_n] \ge 0
\]
for all $y\in\cM\ominus\cL$ and $c_1,\dots,c_n\in\CC$.  This implies that
\begin{equation} \label{T106}
  \cM' \defeq (\cM\ominus\cL)+\spn\{u_1,\dots,u_n\}
\end{equation}
is a $\frt(\gamma)$-non-negative subspace of $\cD$.
The sum in \eqref{T106} is direct because of \eqref{T107} and
\[
  \frt(\lambda_n)[x] \le 0, \qquad x\in\spn\{u_1,\dots,u_n\},
\]
which is true by Lemma~\ref{le:05}\,(ii).  Hence
\[
  \dim \bigl(\cM'/(\cM\ominus\cL)\bigr) = n, \qquad \dim \bigl(\cM/(\cM\ominus\cL)\bigr) = n-1.
\]
Lemma~\ref{le:00} shows that this contradicts the maximality of $\cM$.

For the second part assume that the inequality in \eqref{varinequ_e} is false
for some $n\in\NN$.  There there exist an $\cM\in\M_\gamma^+$ and a subspace
$\cL\subset\cM$ with $\dim\cL = n-1$ such that
\begin{equation} \label{T110}
  \mu_2 \defeq \inf_{\substack{x\in\cM\setminus\{0\} \\[0.2ex] x\perp\cL}} p(x)
  > \lae.
\end{equation}
According to the definition of $\lae$ there exists a number $\mu_1\in\sess(T)$
so that $\lae\le\mu_1<\mu_2$.
It follows from \eqref{T110} that $\frt(\mu_2)[x]\ge0$ for $x\in\cM\ominus\cL$
and hence from Lemma~\ref{le:02} that
\begin{equation} \label{T111}
  \frt(\mu_1)[x] \ge a\|x\|^2, \qquad x\in\cM\ominus\cL,
\end{equation}
where $a\defeq\min\{\eps,\delta(\mu_2-\mu_1)\}$
and $\eps,\delta$ are such that \eqref{VMepsdelta} is valid for all
$\lambda\in[\gamma,\mu_2]$ and $x\in\cD$.
Set $c\defeq\min\{\eps,\delta(\mu_1-\gamma)\}$ and choose $b>0$ such that
$b<a$ and $ac>b(a+b+3c)$.
Since $\mu_1\in\sess(T)$, i.e.\ $0\in\sess(T(\mu_1))$, there exists
an $n$-dimensional subspace $\cV$ of $\dom(T(\mu_1))\subset\cD$ such that
\begin{equation} \label{T112}
  \|T(\mu_1)v\| \le b\|v\| \qquad \text{for all } v\in\cV.
\end{equation}
Set $\cM'\defeq (\cM\ominus\cL)\,\dot{+}\,\cV$; the sum is direct because of
\eqref{T111}, \eqref{T112} and the inequality $b<a$.
It follows from \eqref{T111}, \eqref{T112} and Lemma~\ref{le:03} that
\begin{equation} \label{T113}
  \frt(\gamma)[y] \ge 0 \qquad \text{for all } y \in \cM',
\end{equation}
i.e.\ $\cM'$ is $\frt(\gamma)$-non-negative.  Since
\[
  \dim\bigl(\cM'/(\cM\ominus\cL)\bigr) = n, \qquad
  \dim\bigl(\cM/(\cM\ominus\cL)\bigr) = n-1,
\]
this is a contradiction to the maximality of $\cM$ according to Lemma~\ref{le:00}.
\end{proof}

\section{Self-adjoint operators}\label{sec:selfadjoint_op}

Let $A$ be a self-adjoint operator and $\fra$ the corresponding quadratic form
with domain $\cD \defeq \dom(\fra) = \dom\bigl(|A|^{1/2}\bigr)$.
We introduce the operator function $T(\lambda) = A-\lambda I$ and the associated form
$\frt(\lambda)[x,y] = \fra[x,y]-\lambda\langle x,y\rangle$, where $\lambda\in\RR$
and $x,y\in\cD$.
Note that $T$ satisfies Assumptions (A1)--(A3) and the condition \VM{}
on any interval since $\frt'(\lambda)[x] = -\|x\|^2$.
As in Definition~\ref{def:rayleigh_Mg+}\,(ii) set
\[
  \M_\gamma^+ \defeq \bigl\{\cM\colon \cM\text{ is a maximal
  $(\fra-\gamma)$-non-negative subspace of }\cD\bigr\}
\]
for $\gamma\in\RR$.

In the following theorem eigenvalues in a gap of the essential spectrum are
characterised by a triple variational principle.
This result is a generalisation of \cite[Theorem~3.1]{EL02}
to unbounded operators.
For other types of variational principles for eigenvalues of self-adjoint operators
in gaps of the essential spectrum see, e.g.\ \cite{DolEsSere06,GS99,KLT04,LLT02},
where a given decomposition of the space is used.
Note that Theorem~\ref{th:selfadjointops} is not a corollary of Theorem~\ref{th:var_equ}
below since there we assume that the values of the operator function are
operators that are bounded from below,
which is not assumed in Theorem~\ref{th:selfadjointops}.

\begin{theorem} \label{th:selfadjointops}
Let $\gamma\in\rho(A)\cap\RR$ and set
\[
  \lae\defeq\inf\bigl(\sess(A)\cap(\gamma,\infty)\bigr).
\]
Moreover, let $(\lambda_j)_{j=1}^N$, $N\in\NN_0\cup\{\infty\}$, be the finite or
infinite sequence of eigenvalues in $(\gamma,\lae)$ in non-decreasing order and
counted according to their multiplicities: $\lambda_1\le\lambda_2\le\cdots$. Then
\begin{equation} \label{varequsao}
  \lambda_n = \sup_{\cM\in\M_\gamma^+} \;\; \sup_{\substack{\cL\subset\cM \\[0.2ex] \dim\cL=n-1}} \;\;
  \inf_{\substack{x\in\cM\setminus\{0\} \\[0.2ex] x\perp\cL}} \;\; \frac{\fra[x]}{\|x\|^2}\,,
  \qquad n\in\NN,\;n\le N.
\end{equation}
Moreover, if $N$ is finite and $\sess(A)\cap(\gamma,\infty)\ne\varnothing$, then
\begin{equation} \label{varequsaoess}
  \min\bigl(\sess(A)\cap(\gamma,\infty)\bigr) = \sup_{\cM\in\M_\gamma^+} \;\;
  \sup_{\substack{\cL\subset\cM \\[0.2ex] \dim\cL=n-1}} \;\;
  \inf_{\substack{x\in\cM\setminus\{0\} \\[0.2ex] x\perp\cL}} \;\; \frac{\fra[x]}{\|x\|^2}\,,
  \qquad n > N.
\end{equation}
\end{theorem}

\begin{proof}
The inequalities `$\le$' in \eqref{varequsao} and \eqref{varequsaoess} follow from
Theorem~\ref{th:varinequ} since
\[
  p(x)=\frac{\fra[x]}{\|x\|^2}
\]
is a generalised Rayleigh functional for the operator function $T$ on $(\gamma,\infty)$.
To show the reverse inequalities, set $\cM\defeq\cL_{(\gamma,\infty)}(A)\cap\cD$
where $\cL_{(\gamma,\infty)}(A)$ denotes the spectral subspace for $A$ corresponding
to the interval $(\gamma,\infty)$.
Clearly, $\cM$ is maximal $\frt(\gamma)$-non-negative because
\[
  \cD = \bigl(\cL_{(-\infty,\gamma)}(A)\cap\cD\bigr)\,\dot{+}\,
  \bigl(\cL_{(\gamma,\infty)}(A)\cap\cD\bigr).
\]
The operator $A|_{\dom(A)\cap\cL_{(\gamma,\infty)}(A)}$ is self-adjoint in
$\cH'\defeq\cL_{(\gamma,\infty)}(A)$ and bounded from below.
A standard variational principle yields that
\begin{alignat*}{2}
  \sup_{\substack{\cL\subset\cM \\[0.2ex] \dim\cL=n-1}}\;\;
  \inf_{\substack{x\in\cM\setminus\{0\} \\[0.2ex] x\perp\cL}}\; \frac{\fra[x]}{\|x\|^2} &= \lambda_n,
  &\qquad & n\in\NN,\;n\le N, \\[1ex]
  \sup_{\substack{\cL\subset\cM \\[0.2ex] \dim\cL=n-1}}\;\;
  \inf_{\substack{x\in\cM\setminus\{0\} \\[0.2ex] x\perp\cL}}\; \frac{\fra[x]}{\|x\|^2} &= \lae,
  &\qquad & n > N,
\end{alignat*}
which shows the inequalities `$\ge$' in \eqref{varequsao} and \eqref{varequsaoess}.
\end{proof}

\begin{remark}\label{re:unifposM}
Let us denote by $\M_\gamma^{++}$ the set of $\cM\in\M_\gamma^+$ on which $\fra-\gamma$ is
uniformly positive, i.e.\
\[
  \M_\gamma^{++} \defeq \bigl\{\cM\in\M_\gamma^+:
  \exists\,c>0 \;\;\text{such that}\;\; \fra[x]-\gamma\|x\|^2 \ge c\|x\|^2
  \;\;\text{for}\;\;x\in\cM\bigr\}.
\]
One can replace the first supremum in \eqref{varequsao} and \eqref{varequsaoess}
by $\sup_{\cM\in\M_\gamma^{++}}$ because $\M_\gamma^{++}\subset\M_\gamma^+$
and the maximising subspace that is used in the proof of Theorem~\ref{th:selfadjointops}
belongs to $\M_\gamma^{++}$.

Assume for the rest of this remark that $\gamma=0$, which is without loss of generality.
Let $\cM\in\M_0^{++}$ and let $C_{\cM}$ be as in \eqref{angular}.
Then $\|C_{\cM}\|<1$, and hence the form $\fra_{\cM} \defeq \fra|_{\cM}$
is a closed positive form in the Hilbert space $\ov{\cM}$.
Let $A_{\cM}$ be the representing operator of $\fra_{\cM}$ in the
sense of \cite[Theorem~VI.2.1]{katopert},
and let $\lambda_1(A_{\cM})\le\lambda_2(A_{\cM})\le\cdots$ be the eigenvalues
of $A_{\cM}$ below the essential spectrum; if there is only a finite number, say $N_{\cM}$,
of eigenvalues, then set $\lambda_n(A_{\cM})\defeq\min\sess(A_{\cM})$ for $n>N_{\cM}$.
The standard variational principle for semi-bounded operators yields
\[
  \lambda_n(A_{\cM}) = \sup_{\substack{\cL\subset\cM \\[0.2ex] \dim\cL=n-1}} \;\;
  \inf_{\substack{x\in\cM\setminus\{0\} \\[0.2ex] x\perp\cL}} \;\; \frac{\fra[x]}{\|x\|^2}\,,
  \qquad n\in\NN.
\]
Hence relation \eqref{varequsao} with $\cM_0^+$ replaced by $\cM_0^{++}$ turns into
\begin{equation}\label{supoverM}
  \lambda_n = \sup_{\cM\in\M_0^{++}} \lambda_n(A_{\cM}),
  \qquad n\in\NN,\;n\le N,
\end{equation}
and a similar relation holds for $\lae$ if $N$ is finite.
\end{remark}

\begin{example}
Consider the Dirac operator
\[
  D \defeq \sum_{j=1}^3 \upalpha_j\frac{1}{i}\partial_j + \upbeta + V
\]
in the space $L^2(\RR^3;\CC^4)$ where $\upalpha_j$ and $\upbeta$
are the $4\times4$ complex matrices
\[
  \upalpha_j = \begin{pmatrix} 0 & \upsigma_j \\[0.5ex] \upsigma_j & 0 \end{pmatrix}, \qquad
  \upbeta = \begin{pmatrix} I & 0 \\[0.5ex] 0 & -I \end{pmatrix}
\]
with $\upsigma_j$ being the Pauli matrices,
\[
  \upsigma_1 = \begin{pmatrix} 0 & 1 \\[0.5ex] 1 & 0 \end{pmatrix}, \qquad
  \upsigma_2 = \begin{pmatrix} 0 & -i \\[0.5ex] i & 0 \end{pmatrix}, \qquad
  \upsigma_3 = \begin{pmatrix} 1 & 0 \\[0.5ex] 0 & -1 \end{pmatrix},
\]
and where $V$ is the electrostatic potential.
Assume that $V$ is such that $D$ is a self-adjoint operator
with form domain $\cD=\dom(|D|^{1/2})$ 
and form $\frd$ as in \eqref{defform}.
If $\gamma\in\rho(D)$, then one can characterise eigenvalues of $D$
in $(\gamma,\min(\sess(D)\cap(\gamma,\infty)))$ with \eqref{varequsao}.

Let $D_0$ be the free Dirac operator, i.e.\ the operator from above with $V\equiv0$
and set $\cM_0\defeq\cL_{(0,\infty)}(D_0)\cap\cD$.
Assume that there exists $c>0$ such that
\begin{equation}\label{dirac_possubsp}
\begin{alignedat}{2}
  \frd[x] &\ge c\|x\|^2 \qquad &&\text{for}\;\; x\in\cM_0, \\[1ex]
  \frd[x] &< 0 &&\text{for}\;\; x\in\cM_0^\perp\cap\cD.
\end{alignedat}
\end{equation}
These conditions are satisfied, e.g.\ when
\begin{equation}\label{assumpV}
  -\frac{\mu}{|x|} \le V(x) \le 0
\end{equation}
with $0\le\mu\le2/\bigl(\frac{\pi}{2}+\frac{2}{\pi}\bigr)$; see \cite[Theorem~1]{tix98}.
If \eqref{dirac_possubsp} is satisfied, then $\cM_0\in\M_0^{++}$,
where $\M_0^{++}$ is as in Remark~\ref{re:unifposM}.
Then $\frd|_{\cM_0}$ defines a positive self-adjoint
operator $B$ in the Hilbert space $\cL_{(0,\infty)}(D_0)$,
which was called Brown--Ravenhall operator in the literature;
see, e.g.\ \cite{BR51,EPS96,tix98,GS99}.
Let $\lambda_n(B)$ be the eigenvalues of $B$ below its essential spectrum;
if $B$ has only a finite number, say $N_B$, of such eigenvalues,
then set $\lambda_n(B)\defeq\min\sess(B)$ for $n>N_B$.
Moreover, let $\lambda_n(D)$ be the eigenvalues of $D$ in the
interval $(0,\min(\sess(D)\cap(0,\infty)))$; again if there are only
finitely many such eigenvalues, say $N_D$, then
set $\lambda_n(D)=\min(\sess(D)\cap(0,\infty))$ for $n>N_D$.
Relation \eqref{supoverM} implies that
\[
  \lambda_n(B)\le\lambda_n(D), \qquad n\in\NN.
\]
This inequality was proved for $V$ satisfying \eqref{assumpV}
with $\mu<\sqrt{3}/2$ in \cite[Theorem~6]{GS99}.
The relation in \eqref{supoverM} also shows that the eigenvalues of the
Dirac operator are obtained by maximising the eigenvalues of operators
that are obtained in a similar way as $B$ but with arbitrary $\cM\in\M_0^{++}$.
%
\end{example}

The following theorem shows that for a non-negative perturbation of a self-adjoint
operator a spectral gap closes only from one side.

\begin{theorem}\label{th:nonneg_pert}
Let $A$ be a self-adjoint operator with corresponding quadratic form $\fra$
and $\alpha,\beta\in\RR$ such that $(\alpha,\beta)\subset\rho(A)$.
Moreover, let $\frb$ be a non-negative quadratic form with
$\dom(\frb)\supset\dom(\fra)$ such that $\fra+\frb$ with domain $\dom(\fra)$
is the quadratic form of a self-adjoint operator $C$, and assume that
\begin{equation}\label{rel_bdd_B_A}
  \frb[x] \le a\|x\|^2 + b\fra[x], \qquad x\in\dom(\fra),
\end{equation}
with some $a,b\ge0$.
If $\hat\alpha<\beta$ where
\begin{equation}\label{defalphahat}
  \hat\alpha\defeq \alpha+a+b\alpha,
\end{equation}
then $(\hat\alpha,\beta)\subset\rho(C)$.
\end{theorem}

\begin{proof}
%
Consider the operator function $T(\lambda)\defeq C-\lambda I$ with corresponding
forms $\frt(\lambda)=\fra+\frb-\lambda$ with domains $\cD=\dom(\fra)$ and the subspace
\[
  \cM\defeq\cL_{(\alpha,\infty)}(A)\cap\cD = \cL_{[\beta,\infty)}(A)\cap\cD.
\]
Let $\mu\in(\hat\alpha,\beta)$ and choose some $\mu_1\in(\hat\alpha,\mu)$.
For $x\in\cM$ and $\lambda\in(\alpha,\beta)$ we have
\begin{equation}\label{T120}
  \frt(\lambda)[x] = \fra[x]+\frb[x]-\lambda\|x\|^2
  \ge (\beta-\lambda)\|x\|^2.
\end{equation}
In particular, this shows that $\cM$ is a $\frt(\mu_1)$-non-negative subspace of $\cD$.
Assume that $\cM$ is not maximal $\frt(\mu_1)$-non-negative.
Then there exists a non-zero element $x_0$ in $\cL_{(-\infty,\alpha]}(A)\cap\cD$
such that $\frt(\mu_1)[x_0]\ge0$.  However,
\begin{align*}
  \frt(\mu_1)[x_0] &= \fra[x_0]+\frb[x_0]-\mu_1\|x_0\|^2 \\[1ex]
  &\le \fra[x_0]+a\|x_0\|^2+b\fra[x_0]-\mu_1\|x_0\|^2 \\[1ex]
  &\le \bigl((1+b)\alpha+a-\mu_1\bigr)\|x_0\|^2
  = (\hat\alpha-\mu_1)\|x_0\|^2 < 0,
\end{align*}
which is a contradiction.  Hence $\cM\in\M_{\mu_1}^+$.
Since $\beta>\mu$, it follows from \eqref{T120} with $\lambda$ replaced by $\mu$
and Theorem~\ref{th:resolv} that $\mu\in\rho(T)=\rho(C)$.
\end{proof}

If $A$ is bounded from below and $\frb$ is a non-negative form
with $\dom(\frb)\supset\dom(\fra)$, then $\fra+\frb$ with domain $\dom(\fra)$
is a closed form that is bounded from below (see, e.g.\ \cite[Theorem~VI.1.31]{katopert}).
Hence there exists a self-adjoint operator $C$ that represents the form $\fra+\frb$
by \cite[Theorem~VI.2.1]{katopert}.  Therefore Theorem~\ref{th:nonneg_pert}
can be applied if \eqref{rel_bdd_B_A} is satisfied.

If $B$ is a bounded non-negative operator, then $\hat\alpha=a+\|B\|$
in Theorem~\ref{th:nonneg_pert};
see \cite[Section~9.4]{birman_solomjak_book} for related considerations.

\begin{example}
Consider a Schr\"odinger operator $H_0$ in $\RR^n$ with potential $V_0$
such that $H_0$ is bounded from below and has a gap $(\alpha,\beta)$ in the spectrum.
For instance, $V_0$ can be a periodic potential.
Moreover, let $V_1$ be non-negative perturbation of $V_0$.
Let $\frh_0$ and $\frv_1$ be the quadratic forms corresponding to $H_0$ and
the multiplication operator with $V_1$ and assume that $\dom(\frh_0)\subset\dom(\frv_1)$
and that there exist $a,b\ge0$ such that
\[
  \int_{\RR^n} V_1(x)|u(x)|^2\rd x \le a\int_{\RR^n}|u(x)|^2\rd x
  + b\int_{\RR^n}\Bigl(|\nabla u(x)|^2+V_0(x)|u(x)|^2\Bigr)\rd x
\]
for $u\in\dom(\frh_0)$.
Let $H$ be the operator corresponding to the form $\frh\defeq\frh_0+\frv_1$.
If $\hat\alpha<\beta$ with $\hat\alpha$ defined as in \eqref{defalphahat},
then $(\hat\alpha,\beta)\subset\rho(H)$.
\end{example}

\section{A spectral decomposition}\label{sec:decomp}

In this section we consider operator functions that are continuous in the norm
resolvent sense and are such that, on some interval $[\alpha,\beta]$,
its spectrum consists only of a finite number of eigenvalues.
The main result is a decomposition of the space into three components:
two components are connected with the endpoints $\alpha,\beta$, and the
third component is the span of the eigenvectors corresponding to the
eigenvalues in $[\alpha,\beta]$.  This decomposition result is an analogue
of \cite[Theorem~7.3]{LMM06} where analytic operator functions whose values
are bounded operators were considered but arbitrary spectrum was allowed
in $[\alpha,\beta]$; cf.\ also similar results for Schur complements of
block operator matrices in \cite{LMMT03} and \cite{LLMT05}.
The decomposition in the following theorem is also used in the next section
to prove a variational principle.

In Proposition~\ref{pr:holom} we prove that, for holomorphic functions of type (B),
no accumulation of eigenvalues outside the essential spectrum can occur,
so that the discreteness assumption of Theorem~\ref{th:decomp} is automatically
satisfied outside the essential spectrum.


\begin{theorem}\label{th:decomp}
Let $T$ be an operator function defined on the interval $[\alpha,\beta]$, where
$\alpha,\beta\in\RR$, $\alpha<\beta$, which satisfies Assumptions {\rm(A1)--(A3)},
is continuous in the norm resolvent sense, and $T(\lambda)$ is bounded from below
for each $\lambda\in[\alpha,\beta]$.
Assume that $\alpha,\beta\in\rho(T)$ and that
\[
  \sigma(T)\cap(\alpha,\beta)=\{\lambda_1,\dots,\lambda_n\}\subset\sigmadis(T)
\]
where $\lambda_1\le\dots\le\lambda_n$ are repeated according to their multiplicities.
Moreover, let $u_1,\dots,u_n$ be corresponding linearly independent eigenvectors,
which exist by Lemma~{\rm\ref{le:06}}.  Then
\begin{equation}\label{decomp}
  \cH = \cL_{(-\infty,0)}\bigl(T(\alpha)\bigr) \dotplus \spn\{u_1,\dots,u_n\}
  \dotplus \cL_{(0,\infty)}\bigl(T(\beta)\bigr).
\end{equation}
\end{theorem}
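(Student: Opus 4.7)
My plan is to establish \eqref{decomp} by proving separately that the sum is direct and that it spans all of $\cH$.

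For directness, suppose $x+y+z=0$ with $x\in\cL_{(-\infty,0)}(T(\alpha))$, $y=\sum_{j=1}^n c_j u_j$ and $z\in\cL_{(0,\infty)}(T(\beta))$. Since $T(\alpha)$ is bounded from below and $0\in\rho(T(\alpha))$, its negative spectrum lies in a bounded interval $[\tau,-a]$ with $a>0$, so $\cL_{(-\infty,0)}(T(\alpha))\subset\dom(T(\alpha))\subset\cD$ and $\frt(\alpha)[x]\le -a\|x\|^2$; symmetrically $\frt(\beta)[z]\ge b\|z\|^2$ on $\cL_{(0,\infty)}(T(\beta))\cap\cD$ for some $b>0$. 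Since $x,y\in\cD$, also $z=-(x+y)\in\cD$. I would apply Lemma~\ref{le:05}(ii) with its $y$ set to $x$, and $\mu=\alpha,\nu=\beta$: the hypothesis $\frt(\alpha)[x]\le0$ gives $\frt(\beta)[x+\sum c_j u_j]=\frt(\beta)[-z]\le0$, forcing $z=0$. Then $x=-\sum c_j u_j$; applying Lemma~\ref{le:05}(i) with $y=0$, $\mu=\alpha$, $\nu=\lambda_n$ yields $\frt(\alpha)[\sum c_j u_j]=\frt(\alpha)[x]\ge0$, forcing $x=0$; linear independence of the $u_j$ then gives all $c_j=0$.

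For spanning, I would introduce the family of subspaces
\[
\cN(\lambda) \defeq \cL_{(-\infty,0)}(T(\alpha)) + \spn\{u_j : \lambda_j<\lambda\} + \cL_{(0,\infty)}(T(\lambda)), \quad \lambda\in[\alpha,\beta]\setminus\sigma(T),
\]
and aim to show $\cN(\lambda)=\cH$ throughout; the case $\lambda=\beta$ is \eqref{decomp}. The starting case $\cN(\alpha)=\cH$ is trivial. Two propagation steps handle the rest. First, within a connected component of $[\alpha,\beta]\cap\rho(T)$, norm resolvent continuity makes the spectral projection $P(\lambda)\defeq\chi_{(-\infty,0)}(T(\lambda))$ norm-continuous, and compactness gives finite chains $\mu=\nu_0<\cdots<\nu_k=\nu$ with $\|P(\nu_i)-P(\nu_{i+1})\|<1$; the standard generic-position fact that $\|P-Q\|<1$ makes $Q|_{\ran P}\colon\ran P\to\ran Q$ a topological isomorphism and yields $\ran P\dotplus\ker Q=\cH$, applied iteratively, transports $\cN(\mu)=\cH$ to $\cN(\nu)=\cH$. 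Second, across each eigenvalue $\lambda_j$ of multiplicity $m_j=\dim\ker T(\lambda_j)$, the Riesz projection $\frac{1}{2\pi i}\oint_{|z|=r}(z-T(\lambda_j))^{-1}\,dz$ for small $r>0$ has rank $m_j$ and range $\ker T(\lambda_j)=\spn\{u_k:\lambda_k=\lambda_j\}$; by norm resolvent continuity and Assumption~(A3) it matches exactly the $m_j$-dimensional drop of $\cL_{(0,\infty)}(T(\lambda))$ across $\lambda_j$, which is compensated by the $m_j$ new eigenvectors that enter $\cN$.

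The main obstacle is the spanning argument, and in particular the propagation inside each resolvent component. Both $\cL_{(-\infty,0)}(T(\alpha))$ and $\cL_{(0,\infty)}(T(\lambda))$ may be infinite-dimensional, ruling out any direct dimension count; one must verify along each chain of close projections that the compositions $P(\nu)|_{\ran P(\mu)}$ induce a linear isomorphism and use these to transfer the sum decomposition. In the bounded, analytic setting of \cite{LMM06} a single contour integration delivers the decomposition cleanly, whereas here the mere norm resolvent continuity makes the chain-and-compactness argument the key technical step.
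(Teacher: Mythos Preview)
Your directness argument is correct and matches the paper's (which packages the same use of Lemma~\ref{le:05} into Lemma~\ref{le:direct}).

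The spanning argument, however, has a genuine gap at exactly the point you flag as the main obstacle. The fact that $\|P(\nu_i)-P(\nu_{i+1})\|<1$ yields $\ran P(\nu_i)\dotplus\ker P(\nu_{i+1})=\cH$, but what you need is $\cH_1\dotplus\ker P(\nu_{i+1})=\cH$, where $\cH_1=\cL_{(-\infty,0)}(T(\alpha))\dotplus\spn\{u_j:\lambda_j<\mu\}$ is \emph{not} $\ran P(\nu_i)$. Having an isomorphism $\ker P(\nu_i)\to\ker P(\nu_{i+1})$ does not by itself transfer a complement: two subspaces can be linearly isomorphic yet fail to be simultaneous complements of a fixed $\cH_1$. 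What one actually needs is that the oblique projection $R_i$ onto $\cH_1$ along $\ker P(\nu_i)$ be bounded (equivalently, that the sum be closed) and then $\|P(\nu_i)-P(\nu_{i+1})\|<1/\|I-R_i\|$, not merely $<1$. Since $\|I-R_i\|$ is not a priori bounded along the chain, compactness of $[\mu,\nu]$ alone does not close the loop.

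The paper supplies precisely this missing ingredient. Lemma~\ref{le:direct} shows, using only the form inequalities $\frt(a)[x]\le0$ on $\cH_1$ and the spectral gap $(0,\delta)\subset\rho(T(b))$, that $\cH_1\dotplus\cL_{[0,\infty)}(T(b))$ is always closed --- independently of whether any decomposition is already known. This uniform closedness (established via a Gohberg--Krupnik sequence criterion, Lemma~\ref{le:gohberg_krupnik}) is what controls the angle between $\cH_1$ and the moving subspace. With it in hand, the paper's Lemmas~\ref{le:decomp1} and \ref{le:decomp2} carry out the two propagation steps you describe (through a resolvent interval, and across an eigenvalue), arguing by contradiction that the components of a would-be limit vector stay bounded. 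Your eigenvalue-crossing sketch via Riesz projections is also not quite complete: under (A3) alone you know $\frt(\lambda)[x]$ is decreasing at zero for each fixed $x$, but this does not directly say the eigenvalue curves of $T(\lambda)$ cross zero monotonically; the paper's Lemma~\ref{le:decomp2} sidesteps this by a direct perturbation argument rather than tracking eigenvalues.

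In short: your overall strategy (propagate $\cN(\lambda)=\cH$ along $[\alpha,\beta]$) is exactly the paper's, but the argument is incomplete without an a priori closedness result for the relevant direct sums, which is where the form hypotheses (A1)--(A3) do essential work beyond mere norm resolvent continuity.
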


\medskip

The next proposition gives a sufficient condition for $\sigma(T)$ having no
accumulation point on an interval.  Note that, without any further continuity assumption,
functions satisfying (A1)--(A3) may have a sequence of eigenvalues that accumulates
outside the essential spectrum; see, e.g.\ the example in Remark~\ref{re:mainths}\,(iii).
Recall that an operator function $T$ defined on a domain $U\subset\CC$
is said to be holomorphic of type (B) if $T(\lambda)$ is $m$-sectorial
for every $\lambda\in U$, the domain of the corresponding closed quadratic form
$\frt(\lambda)$ is independent of $\lambda$: $\dom(\frt(\lambda))\equiv\cD$,
and $\frt(\cdot)[x]$ is holomorphic on $U$ for every $x\in\cD$.
Instead of (A3) we assume the slightly stronger assumption:
\\[1ex]
\hspace*{0.7ex}\textbf{(A3){\boldmath$'$}} \hspace*{0.3ex} if $\frt(\lambda_0)[x]=0$ for some
$x\in\cD\setminus\{0\}$ and $\lambda_0\in\Delta$, then $\frt'(\lambda_0)[x]<0$.
\\[1ex]
In \cite{LLMT08} this condition with the reverse inequality for the derivative
was called (vm).
Note that, without any assumption of type (A3) or (A3)$'$, the result would be
incorrect as the zero function on a finite-dimensional space shows.

\begin{proposition}\label{pr:holom}
Let $U$ be a domain in $\CC$ and let $T$ be a holomorphic family of operators
of type {\rm(B)} defined on $U$.  Moreover, let $\alpha,\beta\in\RR$ with $\alpha<\beta$
be such that $(\alpha,\beta)\subset U$, $T$ satisfies Assumptions {\rm(A1), (A2), (A3)$'$}
on $(\alpha,\beta)$ and $\sess(T)\cap(\alpha,\beta)=\varnothing$.
Then $\sigma(T)\cap(\alpha,\beta)$ has no accumulation point in $(\alpha,\beta)$.
\end{proposition}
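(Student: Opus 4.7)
The plan is to argue by contradiction. Suppose that some $\lambda_0\in(\alpha,\beta)$ is an accumulation point of $\sigma(T)\cap(\alpha,\beta)$. Since $\sess(T)\cap(\alpha,\beta)=\varnothing$, the operator $T(\lambda_0)$ is Fredholm, so either $0\in\rho(T(\lambda_0))$ or $0$ is an isolated eigenvalue of finite algebraic multiplicity. In either case I would pick $\rho>0$ so small that $\Gamma\defeq\{z\in\CC:|z|=\rho\}$ encloses $0$ and is disjoint from $\sigma(T(\lambda_0))\setminus\{0\}$.

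The first main step is to exploit the type (B) holomorphy via the Riesz projection. Standard analytic perturbation theory for type (B) families gives that $(T(\cdot)-z)^{-1}$ is jointly holomorphic on a neighborhood of $\{\lambda_0\}\times\Gamma$, so
\[
  P(\lambda)\defeq -\frac{1}{2\pi i}\oint_\Gamma (T(\lambda)-z)^{-1}\,\rd z
\]
is holomorphic in $\lambda$ near $\lambda_0$, has constant finite rank $m\defeq\dim\ran P(\lambda_0)$, satisfies $\ran P(\lambda)\subset\dom(T(\lambda))$, and is an orthogonal projection when $\lambda$ is real (by self-adjointness of $T(\lambda)$). Using Kato's canonical analytic transformation intertwining $P(\lambda_0)$ with $P(\lambda)$, I would transport $T(\lambda)\restr{\ran P(\lambda)}$ to a real-analytic family of self-adjoint $m\times m$ matrices on the fixed space $\ran P(\lambda_0)$. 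Rellich's theorem then produces real-analytic eigenvalue branches $\mu_1(\lambda),\dots,\mu_m(\lambda)$ and corresponding real-analytic eigenvectors $x_j(\lambda)\in\ran P(\lambda)\subset\dom(T(\lambda))$ with $T(\lambda)x_j(\lambda)=\mu_j(\lambda)x_j(\lambda)$, whose zero sets capture $\{\lambda:0\in\sigma(T(\lambda))\}$ near $\lambda_0$.

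The accumulation hypothesis then forces $\mu_j(\nu_k)=0$ for some $j$ and some sequence $\nu_k\to\lambda_0$ with $\nu_k\ne\lambda_0$; by analyticity the branch $\mu_j$ vanishes identically on a real neighborhood of $\lambda_0$. Consequently $\frt(\lambda)[x_j(\lambda)]=\mu_j(\lambda)\|x_j(\lambda)\|^2\equiv0$ there. Differentiating at $\lambda_0$, using analyticity of $\lambda\mapsto x_j(\lambda)$ in the form-domain norm, yields
\[
  0 = \frt'(\lambda_0)[x_j(\lambda_0)]
  + 2\Re\bigl\langle T(\lambda_0)x_j(\lambda_0),x_j'(\lambda_0)\bigr\rangle
  = \frt'(\lambda_0)[x_j(\lambda_0)],
\]
since the cross term vanishes because $T(\lambda_0)x_j(\lambda_0)=0$. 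Together with $x_j(\lambda_0)\ne0$ and $\frt(\lambda_0)[x_j(\lambda_0)]=0$, this contradicts (A3)'.

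The hard part is the analytic reduction in the second step: selecting the $x_j(\cdot)$ analytically in spite of the $\lambda$-dependent operator domains, and justifying the product-rule expansion of $\frt(\lambda)[x_j(\lambda)]$ in the last display. The pivotal input is that $\ran P(\lambda)\subset\dom(T(\lambda))$ identically for $\lambda$ near $\lambda_0$, so after conjugating by Kato's canonical transformation to a fixed finite-dimensional subspace, the problem reduces to a real-analytic self-adjoint matrix family for which the classical Rellich theorem supplies analytic branches in a norm strong enough to validate the differentiation above.
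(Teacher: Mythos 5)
Your proposal is correct and follows essentially the same route as the paper: localize at a point of $(\alpha,\beta)$ using the Fredholm property, obtain finitely many analytic eigenvalue branches $\mu_j(\lambda)$ with analytic eigenvectors $x_j(\lambda)$ via the Riesz projection, and contradict (A3)' through the derivative formula $\mu_j'(\lambda_0)=\frt'(\lambda_0)[x_j(\lambda_0)]/\|x_j(\lambda_0)\|^2$ (the paper concludes by noting each branch can cross $0$ only downwards, hence finitely many zeros; you by noting that accumulation forces some branch to vanish identically, so its derivative is $0$ — a trivial variant). The one step you flag but do not prove, the product rule for $\lambda\mapsto\frt(\lambda)[x_j(\lambda),y_0]$, is precisely the paper's Lemma~\ref{le:der_form_holom}, and be aware that it does not follow from the finite-dimensional reduction alone (Rellich/Kato's transformation gives analyticity of $x_j(\lambda)$ only in the $\cH$-norm, not in the form norm); the paper instead uses the type (B) representation $\frt(\lambda)[u,v]=\langle T_0(\lambda)Gu,Gv\rangle-M\langle u,v\rangle$ together with local boundedness of $\frt(\lambda)[x_j(\lambda)]$ to show that $Gx_j(\lambda)$ is itself holomorphic, which then yields \eqref{prod_rule}.
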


\medskip

We first prove Theorem~\ref{th:decomp}.  The main idea is to add eigenvalues
successively (see \eqref{decomp_gamma}).  The main auxiliary results needed in
this process are contained in Lemmas~\ref{le:decomp1} and \ref{le:decomp2}.

\begin{lemma}\label{le:gohberg_krupnik}
Let $\cL_1$, $\cL_2$ be closed subspaces of $\cH$ and assume that $\cL_1\cap\cL_2=\{0\}$.
The sum $\cL_1 \dotplus \cL_2$ is not closed if and only if there exist $x_n\in\cL_1$, $y_n\in\cL_2$
such that
\begin{equation}\label{GK_seq}
  \|x_n\| = 1 \quad\text{for all }n\in\NN, \qquad \|x_n+y_n\| \to 0 \quad\text{as}\quad n\to\infty.
\end{equation}
\end{lemma}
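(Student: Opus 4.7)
The plan is to view the addition map
\[
  \Phi: \cL_1 \oplus \cL_2 \to \cH, \qquad \Phi(x,y) = x+y,
\]
as a bounded linear operator from the external direct-sum Banach space (with norm $\|(x,y)\|^2 = \|x\|^2+\|y\|^2$) into $\cH$. Since $\cL_1$ and $\cL_2$ are closed the source is complete, and since $\cL_1\cap\cL_2=\{0\}$ the map $\Phi$ is injective with range $\cL_1\dotplus\cL_2$. The whole proof rests on the open mapping theorem applied to $\Phi$.

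For the ``$\Leftarrow$'' direction, assume sequences $(x_n),(y_n)$ as in \eqref{GK_seq} exist and, for contradiction, that $\cL_1\dotplus\cL_2$ is closed. Then $\Phi$ is a continuous bijection onto a Banach space, so by the open mapping theorem $\Phi^{-1}$ is bounded; equivalently, there exists $C>0$ with $\|x\|+\|y\| \le C\|x+y\|$ for all $x\in\cL_1, y\in\cL_2$. Plugging in the given sequence yields $1=\|x_n\| \le \|x_n\|+\|y_n\| \le C\|x_n+y_n\| \to 0$, a contradiction.

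For the ``$\Rightarrow$'' direction, assume $\cL_1\dotplus\cL_2$ is not closed. Then the inequality $\|x\|+\|y\|\le C\|x+y\|$ must fail for every $C>0$; indeed, if it held, $\Phi^{-1}$ would be bounded on $\ran\Phi$, so $\ran\Phi$ would be isomorphic to the complete space $\cL_1\oplus\cL_2$ and hence closed in $\cH$. Consequently there exist $\tilde x_n\in\cL_1$, $\tilde y_n\in\cL_2$ with $\|\tilde x_n\|+\|\tilde y_n\|=1$ and $\|\tilde x_n+\tilde y_n\|\to 0$.

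The main obstacle is upgrading this to a sequence with $\|x_n\|=1$ exactly. I would first observe that $\liminf\|\tilde x_n\|>0$: otherwise, on a subsequence $\|\tilde x_n\|\to 0$ and so $\|\tilde y_n\|\to 1$, yet $\|\tilde y_n\|\le \|\tilde x_n\|+\|\tilde x_n+\tilde y_n\|\to 0$, a contradiction. Passing to a subsequence with $\|\tilde x_n\|\ge c>0$ and setting $x_n\defeq \tilde x_n/\|\tilde x_n\|\in\cL_1$, $y_n\defeq \tilde y_n/\|\tilde x_n\|\in\cL_2$ gives $\|x_n\|=1$ and $\|x_n+y_n\|=\|\tilde x_n+\tilde y_n\|/\|\tilde x_n\|\to 0$, as required.
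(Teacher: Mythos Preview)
Your proof is correct and follows essentially the same route as the paper's: both directions rest on the equivalence of closedness of $\cL_1\dotplus\cL_2$ with an inequality $\|x\|+\|y\|\le C\|x+y\|$, after which one normalises to produce the desired sequence. The paper quotes this equivalence from \cite[Theorem~2.1.1]{GK92}, whereas you derive it directly via the open mapping theorem, making your argument self-contained; the normalisation bookkeeping is arranged slightly differently (the paper fixes $\|x_n\|=1$ at once and bounds $\|y_n\|$, you first fix $\|\tilde x_n\|+\|\tilde y_n\|=1$ and then show $\|\tilde x_n\|$ is bounded away from zero), but both are routine.
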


\begin{proof}
If $\cL_1\dotplus\cL_2$ is not closed, then, by \cite[Theorem~2.1.1]{GK92}, there
exist $x_n\in\cL_1$, $y_n\in\cL_2$ such that
\[
  \|x_n+y_n\| < \frac{1}{n}\Bigl(\|x_n\|+\|y_n\|\Bigr).
\]
Clearly, $x_n\ne0$ for all $n\in\NN$.  Without loss of generality we can choose $x_n$
such that $\|x_n\|=1$.  The relation
\[
  \frac{1}{n}\Bigl(\|x_n\|+\|y_n\|\Bigr) > \|x_n+y_n\| \ge \|y_n\|-\|x_n\|
\]
implies that $\|y_n\|\le \frac{n+1}{n-1}$ for $n\ge2$ and hence that $\|x_n+y_n\|\to0$, which is \eqref{GK_seq}.

Conversely, assume that there exist $x_n\in\cL_1$, $y_n\in\cL_2$ that satisfy \eqref{GK_seq}.
Then, clearly, there exists no $K$ such that
\[
  \|x_n+y_n\| \ge K\bigl(\|x_n\|+\|y_n\|\bigr) \qquad \text{for all }n\in\NN.
\]
Hence, by \cite[Theorem~2.1.1]{GK92}, the sum $\cL_1\dotplus\cL_2$ is not closed.
\end{proof}

In Lemmas~\ref{le:direct}--\ref{le:decomp2} we assume that the assumptions
of Theorem~\ref{th:decomp} are satisfied.

\begin{lemma}\label{le:direct}
Let $a,b\in[\alpha,\beta]$ with $a<b$ and
let $\cH_1\subset\cD$ be a closed subspace such that $\frt(a)[x]\le0$ for all $x\in\cH_1$.
Assume that $(0,\delta)\subset\rho(T(b))$ for some $\delta>0$.
Then the sums
\begin{equation}\label{dir_sum_closed}
  \cH_1 + \cL_{(0,\infty)}\bigl(T(b)\bigr), \qquad \cH_1 + \cL_{[0,\infty)}\bigl(T(b)\bigr)
\end{equation}
are direct and closed.
\end{lemma}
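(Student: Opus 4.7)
The plan is to prove directness via the monotonicity assumption (A3) applied to $\lambda\mapsto\frt(\lambda)[x]$, and closedness via Lemma~\ref{le:gohberg_krupnik} combined with a spectral decomposition of $T(b)$ that isolates a (finite-dimensional) possible kernel from a uniformly positive part. For directness, take $x\in\cH_1\cap\cL_{[0,\infty)}(T(b))$; since $\cH_1\subset\cD$, $x\in\cD$ and the spectral calculus for $T(b)$ gives $\frt(b)[x]\ge 0$, while the hypothesis on $\cH_1$ gives $\frt(a)[x]\le 0$. By (A3) the continuous function $\lambda\mapsto\frt(\lambda)[x]$ has at most one zero, at which it passes strictly from positive to negative, so $\frt(a)[x]\le 0\le\frt(b)[x]$ with $a<b$ forces $x=0$; the intersection with the smaller $\cL_{(0,\infty)}(T(b))$ is then trivial a fortiori.

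For closedness of $\cH_1+\cL_{[0,\infty)}(T(b))$ I would argue by contradiction via Lemma~\ref{le:gohberg_krupnik}: assume $x_n\in\cH_1$ with $\|x_n\|=1$ and $y_n\in\cL_{[0,\infty)}(T(b))$ satisfy $\|x_n+y_n\|\to 0$. Let $P_+$, $P_0$, $P_-$ be the spectral projections of $T(b)$ associated with $[\delta,\infty)$, $\{0\}$, and $(-\infty,0)$; these sum to $I$ by the gap assumption $(0,\delta)\subset\rho(T(b))$, and under the ambient hypotheses of Theorem~\ref{th:decomp} the subspace $\ran P_0=\ker T(b)$ is finite-dimensional (either $b\in\rho(T)$ and it is trivial, or $b=\lambda_i$ and $0\in\sigmadis(T(\lambda_i))$). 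Since $P_-y_n=0$, one gets $P_-x_n=P_-(x_n+y_n)\to 0$ in $\cH$, and the lower bound $T(b)\ge -MI$ gives $\||T(b)|^{1/2}P_-x_n\|^2\le M\|P_-x_n\|^2\to 0$. From $\frt(a)[x_n]\le 0$ and (A3) I deduce $\frt(b)[x_n]\le 0$ (otherwise continuity would produce a zero of $\frt(\cdot)[x_n]$ in $(a,b]$, which by (A3) would force the function to be strictly positive at $a$). Using the orthogonal decomposition together with $\frt(b)[P_0x_n]=0$ and the spectral-gap bound $\frt(b)[P_+x_n]\ge\delta\|P_+x_n\|^2$,
\[
  \delta\|P_+x_n\|^2\le\frt(b)[P_+x_n]=\frt(b)[x_n]-\frt(b)[P_-x_n]\le M\|P_-x_n\|^2\longrightarrow 0,
\]
so $\|P_+x_n\|\to 0$, and consequently $\|P_0x_n\|\to 1$.

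Finite-dimensionality of $\ran P_0$ now lets me extract a subsequence along which $P_0x_n\to z$ strongly with $\|z\|=1$; combined with $\|P_\pm x_n\|\to 0$ this yields $x_n\to z$ in $\cH$, so $z\in\cH_1$ by closedness of $\cH_1$. But $z\in\ran P_0\subset\cL_{[0,\infty)}(T(b))$, and the directness step then forces $z=0$, contradicting $\|z\|=1$. The companion sum $\cH_1+\cL_{(0,\infty)}(T(b))$ is handled identically and is actually easier: here $P_0y_n=0$ as well, hence $P_0x_n\to 0$, and combined with $\|P_-x_n\|\to 0$ and $\|x_n\|=1$ this forces $\|P_+x_n\|\to 1$, in direct contradiction with the spectral-gap estimate $\|P_+x_n\|\to 0$. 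The main obstacle I anticipate is exactly the potentially non-trivial kernel of $T(b)$ in the $[0,\infty)$ case: finite-dimensionality of $\ker T(b)$---inherited from the Theorem~\ref{th:decomp} hypothesis $\sigma(T)\cap(\alpha,\beta)\subset\sigmadis(T)$ rather than from the statement of Lemma~\ref{le:direct} itself---is what provides the compactness needed to extract a strongly convergent subsequence of $P_0x_n$ and close the argument.
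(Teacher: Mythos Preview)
Your proof is correct and follows essentially the same approach as the paper: (A3) for directness, Lemma~\ref{le:gohberg_krupnik} together with the spectral gap estimate $\frt(b)[P_+x_n]\ge\delta\|P_+x_n\|^2$ and the lower bound on $T(b)$ for closedness, and finite-dimensionality of $\ker T(b)$ (inherited, as you correctly note, from the hypotheses of Theorem~\ref{th:decomp}) for the $[0,\infty)$ sum. The only structural difference is that the paper first proves the $(0,\infty)$ case and then obtains the $[0,\infty)$ case by adjoining the finite-dimensional kernel via \cite[Corollary~2.1.1]{GK92}, whereas you handle $[0,\infty)$ directly by extracting a convergent subsequence of $P_0x_n$ and then observe that $(0,\infty)$ is the easier special case.
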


\begin{proof}
The case $\cH_1=\{0\}$ is trivial; so in the following we assume that $\cH_1\ne\{0\}$.
First observe that $\frt(b)[x]<0$ for every $x\in\cH_1\setminus\{0\}$ by Assumption~(A3).
Hence the first sum in \eqref{dir_sum_closed} is direct.  Assume that it is not closed.
Then, by Lemma~\ref{le:gohberg_krupnik}, there exist $x_n\in\cH_1$
and $y_n\in\cL_{(0,\infty)}(T(b))$, $n\in\NN$, such that
\[
  \|x_n\|=1 \quad\text{and}\quad \|x_n+y_n\|\to0 \quad\text{as}\quad n\to\infty.
\]
Set $M_0 \defeq \min \sigma(T(b))$, which is negative because $\frt(b)[x]<0$ for $x\in\cH_1\setminus\{0\}$.
Let $E$ be the spectral measure associated with the operator $T(b)$.
Then, for all $n\in\mathbb{N}$, we have
\begin{align}
  0 &> \frt(b)[x_n]
  = \int_{M_0}^0 \lambda\,\rd\bigl\langle E(\lambda)x_n,x_n\bigr\rangle
  + \int_\delta^\infty \lambda\,\rd\bigl\langle E(\lambda)x_n,x_n\bigr\rangle \notag\\[1ex]
  &\ge M_0\bigl\|E\bigl((-\infty,0)\bigr)x_n\bigr\|^2
  + \delta\bigl\|E\bigl((0,\infty)\bigr)x_n\bigr\|^2.
  \label{direct_eq1}
\end{align}
Since
\begin{align*}
  \bigl\|E\bigl((-\infty,0)\bigr)x_n\bigr\|
  &\le \bigl\|E\bigl((-\infty,0)\bigr)y_n\bigr\|
  + \bigl\|E\bigl((-\infty,0)\bigr)(x_n+y_n)\bigr\| \\[1ex]
  &= \bigl\|E\bigl((-\infty,0)\bigr)(x_n+y_n)\bigr\| \\[1ex]
  &\le \|x_n+y_n\| \to 0 \quad\text{as }n\to\infty, \\[2ex]
  \bigl\|E\bigl((0,\infty)\bigr)x_n\bigr\|
  &\ge \bigl\|E\bigl((0,\infty)\bigr)y_n\bigr\|
  - \bigl\|E\bigl((0,\infty)\bigr)(x_n+y_n)\bigr\| \\[1ex]
  &\ge \|y_n\| - \|x_n+y_n\| \to 1 \quad\text{as }n\to\infty,
\end{align*}
it follows that the right-hand side
of \eqref{direct_eq1} is positive for all sufficiently large $n$.
This contradiction shows that the first sum in \eqref{dir_sum_closed} is closed.

Since the second sum can be written as
\[
  \cH_1 + \cL_{(0,\infty)}\bigl(T(b)\bigr) + \ker\bigl(T(b)\bigr),
\]
it is closed by the first part of the proof and the fact that $\ker(T(b))$ is finite-dimensional;
see, e.g.\ \cite[Corollary~2.1.1]{GK92}.
Assume that the sum is not direct.  Then there exist $u\in\cH_1$, $v\in\cL_{(0,\infty)}(T(b))$,
$w\in\ker(T(b))$ such that $u+v+w=0$ and $v+w\ne0$.  Clearly, $\frt(b)[v+w]\ge0$.
Assumption~(A3) implies that $\frt(a)[v+w]>0$, which contradicts $\frt(a)[u]\le0$.
Hence also the second sum in \eqref{dir_sum_closed} is direct and closed.
\end{proof}

\begin{lemma}\label{le:decomp1}
Let $a,b\in[\alpha,\beta]$ be such that $a<b$ and $(a,b)\subset\rho(T)$.
Moreover, let $\cH_1\subset\cD$ be a closed subspace such that $\frt(a)[x]\le0$ for all $x\in\cH_1$.
Assume that
\begin{equation}\label{decomp11}
  \cH = \cH_1 \dotplus \cL_{(0,\infty)}\bigl(T(\mu)\bigr) \qquad\text{for all }\mu\in(a,b).
\end{equation}
Then
\begin{equation}\label{decomp12}
  \cH = \cH_1 \dotplus \cL_{[0,\infty)}\bigl(T(b)\bigr).
\end{equation}
\end{lemma}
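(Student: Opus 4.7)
The plan is to verify directness, closedness, and spanning of the sum in \eqref{decomp12}, with norm resolvent continuity bridging $\mu \uparrow b$ in \eqref{decomp11}.

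\textbf{Directness.} Suppose $x \in \cH_1 \cap \cL_{[0,\infty)}(T(b))$ with $x \ne 0$. Since $\cH_1 \subset \cD$, one has $\frt(a)[x] \le 0$ by hypothesis on $\cH_1$, and $\frt(b)[x] \ge 0$ from the spectral representation of $T(b)$. Applied to the continuous function $\lambda \mapsto \frt(\lambda)[x]$ on $[a, b]$, Assumption (A3) yields a contradiction in both sub-cases: $\frt(a)[x] = 0$ forces $\frt(b)[x] < 0$ by (A3) at $\lambda_0 = a$, while $\frt(a)[x] < 0$ combined with $\frt(b)[x] \ge 0$ produces, via the intermediate value theorem, a zero $\lambda_0 \in (a, b]$ of $\frt(\cdot)[x]$, whence (A3) gives $\frt(a)[x] > 0$.

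\textbf{Closedness.} I would invoke Lemma \ref{le:direct} (with $a$, $b$, $\cH_1$ as here) to conclude that $\cH_1 + \cL_{[0,\infty)}(T(b))$ is direct and closed, once the hypothesis $(0, \delta) \subset \rho(T(b))$ of that lemma has been established. To prove this resolvent statement, I would combine norm resolvent continuity of $T$ at $b$ with the hypothesis $0 \in \rho(T(\mu))$ for every $\mu \in (a, b)$: any accumulation of $\sigma(T(b))$ at $0^+$ would, via upper semi-continuity of the essential spectrum and continuous dependence of isolated finite-multiplicity eigenvalues under norm resolvent convergence, transfer to spectrum of $T(\mu)$ near $0$ for $\mu$ close to $b$, contradicting $0 \in \rho(T(\mu))$.

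\textbf{Spanning.} Let $E \defeq E_{T(b)}((-\infty, 0))$. The desired decomposition $v = x + y$ with $x \in \cH_1$ and $y \in \cL_{[0,\infty)}(T(b))$ is equivalent to solving $E x = E v$ with $x \in \cH_1$, i.e., to surjectivity of the map $E|_{\cH_1} \colon \cH_1 \to \cL_{(-\infty, 0)}(T(b))$. For each $\mu \in (a, b)$, setting $E_\mu \defeq E_{T(\mu)}((-\infty, 0))$, the analogous map $E_\mu|_{\cH_1}$ is a topological isomorphism onto $\cL_{(-\infty, 0)}(T(\mu))$ by \eqref{decomp11}. Using $(0, \delta) \subset \rho(T(b))$, norm resolvent continuity yields $E_\mu \to E$ in norm as $\mu \to b^-$; combined with a uniform lower bound $\|E_\mu x\| \ge c\|x\|$ for $x \in \cH_1$ (derived by contradiction from the directness just shown), the surjectivity passes to the limit.

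The main difficulty lies in establishing $(0, \delta) \subset \rho(T(b))$. Without the condition \VM{}, the quantitative Lemmas \ref{le:02}--\ref{le:04} do not apply, and the argument must rely on coupling norm resolvent continuity with the validity of \eqref{decomp11} uniformly over $\mu \in (a, b)$. A secondary technical point is the uniform-in-$\mu$ boundedness of $(E_\mu|_{\cH_1})^{-1}$, which is needed to transfer surjectivity to $b$ and should follow from directness of the limit decomposition via a compactness argument.
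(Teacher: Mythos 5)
Your overall architecture (directness via (A3), closedness via Lemma~\ref{le:direct}, spanning by passing the decompositions \eqref{decomp11} to the limit $\mu\uparrow b$ through norm-continuity of spectral projections) is the same as the paper's, and the directness argument is fine. But two steps do not hold up. First, the point you single out as the main difficulty, $(0,\delta)\subset\rho(T(b))$, is not a difficulty at all: this lemma is stated under the standing assumptions of Theorem~\ref{th:decomp} (the paper fixes these for Lemmas~\ref{le:direct}--\ref{le:decomp2}), so $b\in\sigmadis(T)\cup\rho(T)$, i.e.\ $0$ is either in $\rho(T(b))$ or an isolated finite-multiplicity eigenvalue of $T(b)$; a punctured neighbourhood of $0$ lies in $\rho(T(b))$ immediately. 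Your proposed derivation from norm resolvent continuity alone would in any case not close: if $\sigma(T(b))$ accumulated at $0^+$ through points $s_k\downarrow 0$, continuity would only place spectrum of $T(\mu)$ \emph{near} each $s_k$, not \emph{at} $0$, so no contradiction with $0\in\rho(T(\mu))$ arises.

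The genuine gap is the claim that $E_\mu=E_{T(\mu)}\bigl((-\infty,0)\bigr)$ converges in norm to $E=E_{T(b)}\bigl((-\infty,0)\bigr)$ as $\mu\to b^-$. Norm convergence of spectral projections requires a spectral gap around the cut point that is uniform in $\mu$, and at the cut point $0$ there is none: if $b$ is an eigenvalue of $T$, then $0\in\sigma(T(b))$ forces $\sigma(T(\mu))$ to have points tending to $0$ as $\mu\to b^-$, and whether they approach from above or below (equivalently, whether $E_{T(\mu)}\bigl([-2\delta/3,0)\bigr)\to 0$) is exactly the delicate issue; under (A1)--(A3) and norm resolvent continuity alone this is not justified (ruling out negative spectrum of $T(\mu)$ creeping up to $0$ is what needs proof, and your surjectivity-passing argument collapses without it, since a nonzero orthogonal projection has norm $1$). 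The paper's device is to move the cut into the \emph{uniform} resolvent gap: one first gets $[-\delta,-\tfrac{\delta}{3}]\subset\rho(T(\mu))$ for all $\mu\in[b-\eps,b]$ from \cite[Theorem~VI.5.10]{katopert}, then works with the Riesz projections $P(\mu)$ onto $\cL_{(-2\delta/3,\infty)}(T(\mu))$, which \emph{are} norm-continuous, and uses $\cL_{(0,\infty)}(T(b_n))\subset\ran P(b_n)$ together with $\ran P(b)=\cL_{[0,\infty)}(T(b))$ and the closedness from Lemma~\ref{le:direct} to push a bounded sequence of decompositions of a given $x_0$ to the limit. Your approach can be repaired along exactly these lines (replace $E_\mu$ by $I-P(\mu)$ and establish the uniform bound on the $\cH_1$-components by the Gohberg--Krupnik criterion of Lemma~\ref{le:gohberg_krupnik}), but as written the key convergence statement is unsupported.
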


\begin{proof}
Since $b\in\sigmadis(T)\cup\rho(T)$, the sum in \eqref{decomp12} is direct by
Lemma~\ref{le:direct} and there exists $\delta>0$ such that
$[-\delta,0)\subset\rho(T(b))$.  It follows from \cite[Theorem VI.5.10]{katopert}
that there exists $\eps>0$ such that $\bigl[-\delta,-\frac{\delta}{3}\bigr]\subset\rho(T(\mu))$
for all $\mu\in[b-\eps,b]$.  For such $\mu$ we have
$\bigl[-\frac{\delta}{3},\frac{\delta}{3}\bigr]\subset\rho\bigl(T(\mu)+\frac{2\delta}{3}\bigr)$ and
\[
  \cL_{(-\frac{2\delta}{3},\infty)}(T(\mu))
  = \cL_{(0,\infty)}\Bigl(T(\mu)+\frac{2\delta}{3}\Bigr)
  = \cL_{(0,\infty)}\biggl(\Bigl(T(\mu)+\frac{2\delta}{3}\Bigr)^{-1}\biggr).
\]
By \cite[Theorem~VII.4.2]{katopert} the operators $T(\mu)+\frac{2\delta}{3}$ are
uniformly bounded from below on $[b-\eps,b]$, say $T(\mu)+\frac{2\delta}{3}\gg M$.  Then
\[
  \sigma\biggl(\Bigl(T(\mu)+\frac{2\delta}{3}\Bigr)^{-1}\biggr)
  \subset \Bigl(-\infty,\frac{1}{M}\Bigr)\cup\Bigl[0,\frac{3}{\delta}\Bigr).
\]
If $\Gamma$ is a circle passing through $\frac1M$ and $\frac3\delta$, then
\[
  \cL_{(0,\infty)}\biggl(\Bigl(T(\mu)+\frac{2\delta}{3}\Bigr)^{-1}\biggr) = \ran P(\mu)
\]
where
\[
  P(\mu) \defeq -\frac{1}{2\pi i}\int_{\Gamma}
  \biggl(\Bigl(T(\mu)+\frac{2\delta}{3}\Bigr)^{-1}-z\biggr)^{-1}\rd z.
\]
Since $T$ is continuous in norm resolvent sense, the family of spectral projections $P(\mu)$
is uniformly continuous on the interval $[b-\eps,b]$.

Now let $x_0\in\mathcal{H}$.  We show that $x_0$ is contained in the set on the
right-hand side of \eqref{decomp12}.  To this end, let $b_n\in(b-\eps,b)$ for $n\in\NN$ with $b_n\to b$.
By \eqref{decomp11} we can write
\[
  x_0 = x_n+y_n \qquad\text{with}\quad x_n \in \cH_1,\;\;
  y_n \in \cL_{(0,\infty)}\bigl(T(b_n)\bigr).
\]
Suppose that $\|y_n\|$ is not bounded.  Without loss of generality assume that $\|y_n\|\to\infty$,
which implies that $\|x_n\|\to\infty$.  Clearly, $P(b_n)y_n=y_n$ since
$y_n\in\cL_{(0,\infty)}(T(b_n))\subset\cL_{(-\frac{2\delta}{3},\infty)}(T(b_n))$.
Set $\hat{y}_n \defeq P(b)y_n \in \cL_{[0,\infty)}(T(b))$.
Since $\delta_n \defeq \|P(b_n)-P(b)\| \to 0$ as $n\to\infty$, we have
\begin{align*}
  \left\|\frac{y_n}{\|x_n\|}-\frac{\hat{y}_n}{\|x_n\|}\right\|
  &= \frac{1}{\|x_n\|}\bigl\|\bigl(P(b_n)-P(b)\bigr)y_n\bigr\| \\[1ex]
  &\le \delta_n\frac{\|y_n\|}{\|x_n\|}
  \le \delta_n\frac{\|x_0\|+\|x_n\|}{\|x_n\|} \to 0 \qquad \text{as }n\to\infty.
\end{align*}
This relation together with $\|x_n\|\to\infty$ yields
\[
  \frac{x_n}{\|x_n\|} + \frac{\hat{y}_n}{\|x_n\|}
  = \frac{x_0}{\|x_n\|} - \left(\frac{y_n}{\|x_n\|} - \frac{\hat y_n}{\|x_n\|}\right)
  \to 0.
\]
It follows from Lemma~\ref{le:gohberg_krupnik} that $\cH_1\dotplus\cL_{[0,\infty)}(T(b))$
is not closed, which contradicts Lemma~\ref{le:direct}.  Hence the sequences $(x_n)$ and $(y_n)$
are uniformly bounded and therefore $\|y_n-\hat y_n\|\to 0$.
Setting
\[
  x_0(n) \defeq x_n + P(b)y_n \in \cH_1\dotplus\cL_{[0,\infty)}(T(b))
\]
we obtain $x_0 - x_0(n) = \bigl(P(b_n)-P(b)\bigr)y_n\to 0$.  This implies that
$x_0\in\cH_1\dotplus\cL_{[0,\infty)}(T(b))$ since the latter space is closed
by Lemma~\ref{le:direct}.
\end{proof}

\begin{lemma}\label{le:decomp2}
Let $a,\mu_0\in[\alpha,\beta)$ with $a<\mu_0$ and
let $\cH_1\subset\cD$ be a closed subspace such that $\frt(a)[x]\le0$ for all $x\in\cH_1$.
Assume that
\begin{equation}\label{decomp21}
  \cH = \cH_1 \dotplus \cL_{[0,\infty)}\bigl(T(\mu_0)\bigr).
\end{equation}
Then there exists an $\eps>0$ such that
\begin{equation}\label{decomp22}
  \cH = \cH_1 \dotplus \ker\bigl(T(\mu_0)\bigr) \dotplus \cL_{(0,\infty)}\bigl(T(\mu)\bigr)
  \quad \text{for all } \mu\in[\mu_0,\mu_0+\eps).
\end{equation}
\end{lemma}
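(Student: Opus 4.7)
The plan is to set $\cH_2 \defeq \cH_1 + \cK$ with $\cK\defeq\ker T(\mu_0)$ and establish $\cH = \cH_2 \dotplus \cL_{(0,\infty)}(T(\mu))$ for $\mu\in[\mu_0,\mu_0+\eps)$, from which \eqref{decomp22} follows by associativity of direct sums. Under the standing assumptions of Theorem~\ref{th:decomp}, $\mu_0\in\sigmadis(T)\cup\rho(T)$, so $\cK$ is finite-dimensional and $0$ is isolated in $\sigma(T(\mu_0))$; I would pick $\delta>0$ with $\sigma(T(\mu_0))\cap((-\delta,0)\cup(0,\delta))=\varnothing$ and shrink $\eps$ so that $(\mu_0,\mu_0+\eps)\subset\rho(T)$ (hence $0\in\rho(T(\mu))$ there), which is possible because $\mu_0$ is isolated in $\sigma(T)\cap(\alpha,\beta)$. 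The subspace $\cH_2$ is closed as the sum of a closed and a finite-dimensional subspace, and $\cH_1\dotplus\cK$ is a direct sum since $\cH_1\cap\cK\subset\cH_1\cap\cL_{[0,\infty)}(T(\mu_0))=\{0\}$ by \eqref{decomp21}.

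The first key step is to show that $\frt(\mu_0)[y]\le 0$ for every $y=x_1+x_2\in\cH_2$ with $x_1\in\cH_1$ and $x_2\in\cK$. Since $x_2\in\cK\subset\dom T(\mu_0)$, polarisation gives
\[
  \frt(\mu_0)[y] = \frt(\mu_0)[x_1] + 2\Re\langle T(\mu_0)x_2, x_1\rangle + \langle T(\mu_0)x_2, x_2\rangle = \frt(\mu_0)[x_1],
\]
and Assumption~(A3) applied to $x_1$ with $\frt(a)[x_1]\le 0$ and $a<\mu_0$ forces $\frt(\mu_0)[x_1]\le 0$ (the continuous function $\lambda\mapsto\frt(\lambda)[x_1]$ has at most one sign-changing zero and so cannot become positive on $[a,\mu_0]$). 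Now Lemma~\ref{le:direct}, applied with $\cH_2$ in the role of ``$\cH_1$'' and $(a,b)=(\mu_0,\mu)$, shows that $\cH_2+\cL_{(0,\infty)}(T(\mu))$ is direct and closed for every $\mu\in(\mu_0,\mu_0+\eps)$ (using $0\in\rho(T(\mu))$ to produce the required gap). The case $\mu=\mu_0$ is handled directly by splitting $\cL_{[0,\infty)}(T(\mu_0))=\cK\dotplus\cL_{(0,\infty)}(T(\mu_0))$ in \eqref{decomp21}.

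The main remaining task, and the step I expect to be the principal difficulty, is surjectivity: that $\cH_2+\cL_{(0,\infty)}(T(\mu))$ exhausts $\cH$ for $\mu>\mu_0$ close to $\mu_0$. Because $\delta/2\in\rho(T(\mu_0))$, a Riesz contour around $[\delta/2,\infty)$ as in the proof of Lemma~\ref{le:decomp1} produces a norm-continuous family of projections $P_\mu$ onto $\cL_{[\delta/2,\infty)}(T(\mu))$, with $P_{\mu_0}\cH = \cL_{[\delta,\infty)}(T(\mu_0)) = \cL_{(0,\infty)}(T(\mu_0))$. I would introduce the bounded linear map $\Psi_\mu\colon\cH_2\oplus P_{\mu_0}\cH\to\cH$ defined by $\Psi_\mu(y,x)=y+P_\mu x$; then $\Psi_{\mu_0}(y,x)=y+x$ is a bounded bijection by the already-established $\cH=\cH_2\dotplus P_{\mu_0}\cH$, hence a bounded isomorphism by the open mapping theorem, while $\|\Psi_\mu-\Psi_{\mu_0}\|\le\|P_\mu-P_{\mu_0}\|\to 0$ ensures via a Neumann series argument that $\Psi_\mu$ remains invertible for $\mu$ sufficiently close to $\mu_0$. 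Its surjectivity yields $\cH = \cH_2 + P_\mu\cH\subset\cH_2+\cL_{(0,\infty)}(T(\mu))$, which combined with the directness established above completes the proof. A pleasant feature of this approach is that it sidesteps the more delicate question of whether $T(\mu)$ has small positive spectrum in $(0,\delta/2)$: only the trivial inclusion $\cL_{[\delta/2,\infty)}(T(\mu))\subset\cL_{(0,\infty)}(T(\mu))$ is used.
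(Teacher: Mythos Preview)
Your proof is correct, and the overall architecture---grouping $\cH_1$ and $\ker T(\mu_0)$ into a single $\frt(\mu_0)$-non-positive subspace $\cH_2$, then invoking Lemma~\ref{le:direct} for directness and closedness, and finally perturbing a bijection for surjectivity---works cleanly.

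The paper takes a somewhat different route, especially for the surjectivity step. Rather than packaging things into a map $\Psi_\mu$ and applying a Neumann-series argument, the paper first extracts from the decomposition \eqref{decomp21} a uniform bound $K$ on the $\cL_{(0,\infty)}(T(\mu_0))$-component of any unit vector (via Lemma~\ref{le:gohberg_krupnik}), then argues by contradiction: if some unit $x_0$ were orthogonal to the right-hand side of \eqref{decomp22}, one writes $x_0=u+v+w$ using \eqref{decomp21} and replaces $w$ by $P(\mu)w$ to produce an element $y$ of the right-hand side with $\|x_0-y\|\le\|P(\mu)-P(\mu_0)\|K<1$, contradicting $x_0\perp y$. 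This is really the same perturbation idea unwrapped by hand---the bound $K$ plays the role of (a component of) $\|\Psi_{\mu_0}^{-1}\|$. Your formulation via invertibility of $\Psi_\mu$ is more conceptual and arguably tidier; the paper's version is more elementary in that it avoids introducing the auxiliary direct-sum space and the open mapping theorem. For the directness part, the paper argues the three-fold sum directly using Lemma~\ref{le:05}\,(ii), whereas you first reduce to a two-fold sum by showing $\frt(\mu_0)\le0$ on $\cH_2$; these are equivalent reorganisations of the same computation.
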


\begin{proof}
First we prove that the sum on the right-hand side of \eqref{decomp22} is direct
and closed for all $\mu\in[\mu_0,\beta]$.  It follows from Lemma~\ref{le:direct}
that the sum $\cH_1+\cL_{(0,\infty)}(T(\mu))$ is direct and closed.
Since $\ker(T(\mu_0))$ is finite-dimensional, the sum on the right-hand side
of \eqref{decomp22} is closed; see \cite[Corollary~2.1.1]{GK92}.
Assume that it is not direct.  Then there exist $u\in\cH_1$, $v\in\ker(T(\mu_0))$,
$w\in\cL_{(0,\infty)}(T(\mu))$ such that $u+v+w=0$ and $w\ne0$.
By Lemma~\ref{le:05}\,(ii) we have $\frt(\mu)[u+v]\le0$, which contradicts
$w\in\cL_{(0,\infty)}(T(\mu))$.  Hence the sum on the right-hand side of \eqref{decomp22}
is direct and closed.

Next we show that there exists a $K>0$ such that
\begin{equation}\label{boundK}
\begin{aligned}
  & x\in\cH_1,\;y\in\ker\bigl(T(\mu_0)\bigr),\;w\in\cL_{(0,\infty)}\bigl(T(\mu_0)\bigr),\;\;
  \|x+y+w\|=1 \\[1ex]
  &\Longrightarrow\quad \|w\|\le K.
\end{aligned}
\end{equation}
Assume that this is not true.  Then there exist $x_n\in\cH_1$, $y_n\in\ker(T(\mu_0))$,
$w_n\in\cL_{(0,\infty)}(T(\mu_0))$ such that $\|x_n+y_n+w_n\|=1$ and $\|w_n\|\to\infty$.
In this case also $\|y_n+w_n\|\to\infty$ and hence $\|x_n\|\to\infty$.
Since
\[
  \frac{x_n}{\|x_n\|}+\frac{y_n+w_n}{\|x_n\|}
  = \frac{x_n+y_n+w_n}{\|x_n\|} \to 0,
\]
Lemma~\ref{le:gohberg_krupnik} implies that the sum $\cH_1 \dotplus \cL_{[0,\infty)}(T(\mu_0))$
is not closed, which contradicts \eqref{decomp21}.
Hence a $K>0$ with the desired property exists.

Let $P(\mu)$ be the orthogonal projection onto $\cL_{(0,\infty)}(T(\mu))$ for $\mu\in[\mu_0,\beta]$.
Similarly as in the proof of the previous lemma one shows that
$d_\mu \defeq \|P(\mu)-P(\mu_0)\| \to 0$ as $\mu\searrow\mu_0$.  Hence
there exists an $\eps>0$ such that $\delta_\mu K<1$ for all $\mu\in[\mu_0,\mu_0+\eps)$.
We show that \eqref{decomp22} holds for all such $\mu$.
Assume that this is not the case.  Then, for some $\mu\in[\mu_0,\mu_0+\eps)$ there exists an $x_0\in\cH$
with $\|x_0\|=1$ which is orthogonal to the right-hand side of \eqref{decomp22}.
Since \eqref{decomp21} is true by assumption, we can write
\[
  x_0 = u+v+w \qquad\text{with}\quad
  u\in\cH_1,\; v\in\ker\bigl(T(\mu_0)\bigr),\;w\in\cL_{(0,\infty)}\bigl(T(\mu_0)\bigr).
\]
By \eqref{boundK} we have $\|w\|\le K$.  Now set $y \defeq u+v+P(\mu)w$, which is contained
in the right-hand side of \eqref{decomp22}.  Then
\[
  \|x_0-y\| = \|w-P(\mu)w\| = \bigl\|\bigl(P(\mu_0)-P(\mu)\bigr)w\bigr\|
  \le \delta_\mu K < 1,
\]
which is a contradiction to the facts that $x_0\perp y$ and $\|x_0\|=1$.
\end{proof}

\begin{proof}[Proof of Theorem~\ref{th:decomp}]
Let $\hat\lambda_1<\cdots<\hat\lambda_m$ be the eigenvalues of $T$ in the interval $(\alpha,\beta)$
\emph{not} counted with multiplicities and set $\hat\lambda_0 \defeq \alpha$.
For $\gamma\in[\alpha,\beta]$ consider the statement
\begin{equation}\label{decomp_gamma}
  \hspace*{-1ex}\begin{aligned}
    \cH = \cL_{(-\infty,0)}\bigl(T(\alpha)\bigr) \dotplus \ker\bigl(T(\hat\lambda_1)\bigr)
    \dotplus \ldots \dotplus \ker\bigl(T(\hat\lambda_k)\bigr) \dotplus \cL_{[0,\infty)}\bigl(T(\gamma)\bigr) & \\[0.5ex]
    \text{where $k$ is such that } \hat\lambda_1,\dots,\hat\lambda_k \text{ are the eigenvalues of $T$ in } (\alpha,\gamma). &
  \end{aligned}
\end{equation}
If $(\alpha,\gamma)$ contains no eigenvalues, then $k=0$.
For $\gamma=\alpha$ the statement is certainly true.
We prove that \eqref{decomp_gamma} holds for all $\gamma\in[\alpha,\beta]$.
Assume that this is not the case and let
$\gamma_0 \defeq \inf\bigl\{\gamma\in[\alpha,\beta]:
\text{\eqref{decomp_gamma} does not hold}\bigr\}$.
Set
\[
  \cH_1 \defeq \cL_{(-\infty,0)}\bigl(T(\alpha)\bigr) \dotplus \ker\bigl(T(\hat\lambda_1)\bigr)
  \dotplus \ldots \dotplus \ker\bigl(T(\hat\lambda_k)\bigr)
\]
where $k$ is such that $\hat\lambda_1,\dots,\hat\lambda_k$ are the eigenvalues of $T$
in the interval $(\alpha,\gamma_0)$.
Lemma~\ref{le:05}\,(ii) implies that $\frt(\hat\lambda_k)[x]\le0$ for all $x\in\cH_1$.

It follows from Lemma~\ref{le:decomp1} with $a=\hat\lambda_k$ and $b=\gamma_0$ that \eqref{decomp_gamma}
holds also for $\gamma=\gamma_0$.  Now, if $\gamma_0<\beta$, then Lemma~\ref{le:decomp2} yields a
contradiction with the definition of $\gamma_0$.
Hence \eqref{decomp_gamma} holds for all $\gamma\in[\alpha,\beta]$.  For $\gamma=\beta$ this is exactly
the assertion of the theorem.
\end{proof}

In order to prove Proposition~\ref{pr:holom}, we first need the following lemma.

\begin{lemma}\label{le:der_form_holom}
Let $T$ be a holomorphic family of operators of type {\rm(B)} defined
on the complex domain $U\subset\CC$ with closed forms $\frt$ such
that $\dom(\frt(\lambda))=\cD$ for all $\lambda\in U$.
Moreover, let $x(\lambda)\in\cD$ for $\lambda\in U$ such that $x(\cdot)$
is holomorphic.
\begin{myenum}
\item
Assume that $\frt(\lambda)[x(\lambda)]$ is locally bounded
and let $y_0\in\cD$.  Then $\frt(\lambda)[x(\lambda),y_0]$ is holomorphic
in $\lambda$, $x'(\lambda)\in\cD$ and
\begin{equation}\label{prod_rule}
  \frac{\rd}{\rd\lambda}\Bigl(\frt(\lambda)[x(\lambda),y_0]\Bigr)
  = \frt'(\lambda)[x(\lambda),y_0]+\frt(\lambda)[x'(\lambda),y_0].
\end{equation}
for all $\lambda\in U$.
\item
Assume that $T(\lambda)x(\lambda)=\nu(\lambda)x(\lambda)$ where $\nu$
is a scalar holomorphic function on $U$.  Further, let $\lambda_0\in U$
and assume that there exists a $y_0\in\dom(T(\lambda_0)^*)$ such that
$T(\lambda_0)^*y_0=\ov{\nu(\lambda_0)}y_0$ and $\langle x(\lambda_0),y_0\rangle\ne0$.
Then
\begin{equation}\label{der_nu}
  \nu'(\lambda_0) = \frac{\frt'(\lambda_0)\bigl[x(\lambda_0),y_0\bigr]}{\bigl\langle x(\lambda_0),y_0\bigr\rangle}\,.
\end{equation}
\end{myenum}
\end{lemma}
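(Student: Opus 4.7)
The plan is to invoke Kato's structure theorem for holomorphic families of type (B) to reduce (i) to manipulations with a bounded-operator-valued holomorphic family, and then deduce (ii) from (i) via the first representation theorem.

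For part~(i), I would fix a reference point $\lambda_*\in U$ and choose a positive self-adjoint operator $H\ge 1$ with $\dom(H^{1/2})=\cD$ inducing the form norm on $\cD$ (e.g.\ from $\Re T(\lambda_*)+c$ for a suitable constant $c$). The structure theorem for type~(B) families then yields, locally in $\lambda$, a representation
\begin{equation*}
  \frt(\lambda)[u,v] = \bigl\langle S(\lambda)H^{1/2}u,\,H^{1/2}v\bigr\rangle, \qquad u,v\in\cD,
\end{equation*}
where $\lambda\mapsto S(\lambda)$ is a holomorphic family of bounded operators on $\cH$ in the operator norm. The hypothesis that $\frt(\lambda)[x(\lambda)]$ is locally bounded, combined with $\|S(\lambda)-S(\lambda_*)\|=O(|\lambda-\lambda_*|)$, transfers to local boundedness of $\|H^{1/2}x(\lambda)\|$. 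The key step is then to verify that $\lambda\mapsto H^{1/2}x(\lambda)$ is $\cH$-holomorphic: weak holomorphy on the dense subset $\{H^{1/2}w:w\in\dom(H)\}$ follows from the identity $\langle H^{1/2}x(\lambda),H^{1/2}w\rangle=\langle x(\lambda),Hw\rangle$ and the $\cH$-holomorphy of $x$, and the norm bound upgrades this to strong holomorphy.

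Once $H^{1/2}x$ is strongly $\cH$-holomorphic, the first assertion of~(i) is immediate from $\frt(\lambda)[x(\lambda),y_0]=\langle S(\lambda)H^{1/2}x(\lambda),H^{1/2}y_0\rangle$. To obtain $x'(\lambda)\in\cD$ and \eqref{prod_rule}, I would apply closedness of $H^{1/2}$ to the pair of limits $(x(\lambda+h)-x(\lambda))/h\to x'(\lambda)$ in $\cH$ and $(H^{1/2}x(\lambda+h)-H^{1/2}x(\lambda))/h\to(H^{1/2}x)'(\lambda)$ in $\cH$, which gives $x'(\lambda)\in\cD$ with $H^{1/2}x'(\lambda)=(H^{1/2}x)'(\lambda)$; differentiating the representation using the product rule for the inner product then yields \eqref{prod_rule}.

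For part~(ii), sectoriality of $\frt(\lambda_0)$ forces $\dom(T(\lambda_0)^*)\subset\dom(\frt(\lambda_0)^*)=\cD$, so $y_0\in\cD$. The first representation theorem gives
\begin{equation*}
  \frt(\lambda)[x(\lambda),y_0] = \bigl\langle T(\lambda)x(\lambda),y_0\bigr\rangle = \nu(\lambda)\langle x(\lambda),y_0\rangle, \qquad \lambda\in U,
\end{equation*}
and setting $y_0=x(\lambda)$ in this computation shows $\frt(\lambda)[x(\lambda)]=\nu(\lambda)\|x(\lambda)\|^2$ is locally bounded, so part~(i) applies. Differentiating both sides at $\lambda_0$ via \eqref{prod_rule} and using
\begin{equation*}
  \frt(\lambda_0)[x'(\lambda_0),y_0] = \bigl\langle x'(\lambda_0),T(\lambda_0)^*y_0\bigr\rangle = \nu(\lambda_0)\langle x'(\lambda_0),y_0\rangle
\end{equation*}
cancels the common $\nu(\lambda_0)\langle x'(\lambda_0),y_0\rangle$ terms and leaves $\frt'(\lambda_0)[x(\lambda_0),y_0]=\nu'(\lambda_0)\langle x(\lambda_0),y_0\rangle$; then \eqref{der_nu} follows by dividing by the non-zero scalar $\langle x(\lambda_0),y_0\rangle$.

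The main obstacle is in (i): upgrading $\cH$-holomorphy of $x$ to membership and holomorphic behaviour in the form domain $\cD$ using only the scalar local boundedness of $\frt(\lambda)[x(\lambda)]$. This forces the reference-point construction of $H$, the operator-norm-continuous representation $S(\lambda)$, and the weak-to-strong holomorphy argument on a dense subset of $\cH$; everything else is then a straightforward product-rule calculation.
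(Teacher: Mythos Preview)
Your proposal is correct and follows essentially the same route as the paper: both fix a reference point, invoke Kato's representation $\frt(\lambda)[u,v]=\langle T_0(\lambda)Gu,Gv\rangle-M\langle u,v\rangle$ with $G=(\Re T(\lambda_0)+M)^{1/2}$, use the local boundedness of $\frt(\lambda)[x(\lambda)]$ together with the uniform equivalence of the form norms (the paper cites \cite[(VII.4.7)]{katopert} for this, which is what your $\|S(\lambda)-S(\lambda_*)\|$ remark is aiming at) to get local boundedness of $Gx(\lambda)$, then upgrade weak to strong holomorphy of $Gx(\lambda)$, deduce $x'(\lambda)\in\cD$, and differentiate; part~(ii) is then the same cancellation argument in both. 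The only cosmetic differences are that the paper keeps the $-M\langle u,v\rangle$ term separate (so that $\Re T_0(\lambda_0)=I$, making the boundedness step cleaner) and tests weak holomorphy against $u\in\cD$ via $\langle Gx(\lambda),u\rangle=\langle x(\lambda),Gu\rangle$ rather than against $H^{1/2}w$ with $w\in\dom(H)$.
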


\medskip

\noindent
Item (ii) of this lemma can be applied, in particular, if $T(\lambda_0)$ is self-adjoint
and one chooses $y_0=x(\lambda_0)$.

\begin{proof}
(i)
Fix $\lambda_0\in U$ and choose $M\in\RR$ such that
$\Re\frt(\lambda_0)+M\gg0$. According to \cite[(VII.4.4)]{katopert}
the form $\frt$ can be written as
\[
  \frt(\lambda)[u,v] = \bigl\langle T_0(\lambda)Gu,Gv\bigr\rangle
  - M\langle u,v\rangle, \qquad u,v\in\cD,
\]
where $T_0$ is a holomorphic operator function whose values are bounded
operators and $G\defeq (\Re T(\lambda_0)+M)^{1/2}$.

Now set $y(\lambda)\defeq Gx(\lambda)$ for $\lambda\in U$.
It follows from \cite[(VII.4.7)]{katopert} that, for each compact subset
$U_0$ of $U$ with $\lambda_0\in U_0$ there exists $C>0$ such that
\begin{align*}
  \|y(\lambda)\|^2 &= \bigl\langle Gx(\lambda),Gx(\lambda)\bigr\rangle
  = \Re\frt(\lambda_0)[x(\lambda)]+M\|x(\lambda)\|^2 \\
  &\le C\bigl|\frt(\lambda)[x(\lambda)]\bigr|+M\|x(\lambda)\|^2
\end{align*}
for all $\lambda\in U_0$.  Since the last expression is bounded on $U_0$
by assumption, it follows that $y(\lambda)$ is locally bounded.
For $u\in\cD$, the scalar function
\[
  \bigl\langle y(\lambda),u\bigr\rangle = \bigl\langle Gx(\lambda),u\bigr\rangle
  = \bigl\langle x(\lambda),Gu\bigr\rangle
\]
is holomorphic in $\lambda$.  Hence $y(\lambda)$ is strongly holomorphic
in $\lambda$; see, e.g.\ \cite[\S VII.1.1]{katopert}.
Moreover, $\langle y'(\lambda),u\rangle = \langle x'(\lambda),Gu\rangle$
for all $u\in\cD=\dom G$, which implies that $x'(\lambda)\in\dom G=\cD$
and $y'(\lambda)=Gx'(\lambda)$.

We conclude that the function
\[
  \frt(\lambda)[x(\lambda),y_0]
  = \bigl\langle T_0(\lambda)y(\lambda),Gy_0\bigr\rangle
  - M\bigl\langle x(\lambda),y_0\bigr\rangle
\]
is holomorphic and that
\begin{align*}
  &\frac{\rd}{\rd\lambda}\Bigl(\frt(\lambda)[x(\lambda),y_0]\Bigr)
  = \frac{\rd}{\rd\lambda}\Bigl(\bigl\langle T_0(\lambda)y(\lambda),Gy_0\bigr\rangle
  -M\bigl\langle x(\lambda),y_0\bigr\rangle\Bigr) \\[1ex]
  &= \bigl\langle T_0'(\lambda)y(\lambda),Gy_0\bigr\rangle
  + \bigl\langle T_0(\lambda)y'(\lambda),Gy_0\bigr\rangle
  - M\bigl\langle x'(\lambda),y_0\bigr\rangle \\[1ex]
  &= \bigl\langle T_0'(\lambda)Gx(\lambda),Gy_0\bigr\rangle
  + \bigl\langle T_0(\lambda)Gx'(\lambda),Gy_0\bigr\rangle
  - M\bigl\langle x'(\lambda),y_0\bigr\rangle \\[1ex]
  &= \frt'(\lambda)[x(\lambda),y_0] + \frt(\lambda)[x'(\lambda),y_0],
\end{align*}
which shows \eqref{prod_rule}.

(ii)
The expression $\frt(\lambda)[x(\lambda)] = \nu(\lambda)\|x(\lambda)\|^2$
is locally bounded in $\lambda$.  Hence we can apply item (i) of this lemma
to the derivative of the equality $\frt(\lambda)[x(\lambda),y_0]=\nu(\lambda)\langle x(\lambda),y_0\rangle$,
which, for $\lambda=\lambda_0$, yields
\begin{equation}\label{382}
  \frt'(\lambda_0)\bigl[x(\lambda_0),y_0\bigr] + \frt(\lambda_0)\bigl[x'(\lambda_0),y_0\bigr]
  = \nu'(\lambda_0)\bigl\langle x(\lambda_0),y_0\bigr\rangle + \nu(\lambda_0)\bigl\langle x'(\lambda_0),y_0\bigr\rangle.
\end{equation}
The second term on the left-hand side is equal to
\[
  \bigl\langle x'(\lambda_0),T(\lambda_0)^*y_0\bigr\rangle
  = \bigl\langle x'(\lambda_0),\ov{\nu(\lambda_0)}y_0\bigr\rangle
  = \nu(\lambda_0)\bigl\langle x'(\lambda_0),y_0\bigr\rangle.
\]
Hence \eqref{382} yields the desired result.
\end{proof}

\begin{proof}[Proof of Proposition~\ref{pr:holom}]
Let $\mu\in(\alpha,\beta)$.  The assumption $\sess(T)\cap(\alpha,\beta)=\varnothing$
implies that there exists an $\eps>0$ such that $\sigma(T(\mu))\cap(-\eps,\eps)\subset\{0\}$
and $n\defeq\dim\ker(T(\mu))$ is finite.  Further, there exists a $\delta_\eps>0$
such that, for all $\lambda$ with $|\lambda -\mu|<\delta_\eps$ the intersection
$\sigma(T(\lambda))\cap(\eps,\eps)$ consists only of eigenvalues of finite multiplicity
with total multiplicity $n$.  These eigenvalues can be enumerated such that they are
analytic functions of $\lambda$.
Let $\nu(\lambda)$ be such an eigenvalue curve with a zero at $\lambda_0$
(i.e.\ $\lambda_0$ is an eigenvalue of $T$), extend it also to a complex
neighbourhood of $\lambda_0$ and let $x(\lambda)$ be corresponding eigenvectors,
which can be chosen to depend analytically on $\lambda$;
see \cite[\S \S VII.6.2 and II.6.2]{katopert}.

Now we can apply Lemma~\ref{le:der_form_holom}\,(ii) with $y_0=x(\lambda_0)$, which yields
\[
  \nu'(\lambda_0) = \frac{\frt'(\lambda_0)[x(\lambda_0)]}{\|x(\lambda_0)\|^2}\,.
\]
Since, by Assumption (A3)$'$, this expression is negative, eigenvalue curves
can cross the $\lambda$-axis only in one direction.  Therefore $T$ has at most $N$
eigenvalues in $[\mu-\delta,\mu+\delta]$,
and hence the eigenvalues cannot accumulate at $\mu$.
\end{proof}

\section{Variational principles for norm resolvent continuous operator functions}\label{sec:var_equ}

In this section we prove that under stronger continuity assumptions on the
operator function we have equality in the variational principle from
Theorem~\ref{th:varinequ}.

\begin{theorem} \label{th:var_equ}
Let $\Delta\subset\RR$ be an interval with right endpoint $\beta\in\RR\cup\{\infty\}$
and let $T$ be an operator function defined on $\Delta$ which satisfies
Assumptions {\rm(A1)--(A3)} on $\Delta$, is continuous in the
norm resolvent sense on $\Delta$, and $T(\lambda)$ is bounded from below
for each $\lambda\in\Delta$.
Moreover, let $p$ be a generalised Rayleigh functional for $T$ on $\Delta$,
let $\gamma\in\rho(T)\cap\Delta$ with $\gamma<\beta$, let $\M_\gamma^+$ be defined as
in Definition~\ref{def:rayleigh_Mg+} and let $\lae$ be as in \eqref{deflae}.

Assume that the spectrum of\, $T$ in $(\gamma,\lae)$ has no accumulation point in
$[\gamma,\lae)$, i.e.\ $\sigma(T)\cap[\gamma,\lae)$ is empty or consists of a finite or infinite
non-decreasing sequence of eigenvalues $(\lambda_n)_{n=1}^N$ with $N\in\NN\cup\{\infty\}$,
counted according to their multiplicities, which can accumulate at most at $\lae$.

If $\sigma(T)\cap(\gamma,\lae)\ne\varnothing$, then
\begin{equation} \label{varequ_holom}
  \lambda_n = \sup_{\cM\in\M_\gamma^+} \sup_{\substack{\cL\subset\cM \\[0.2ex] \dim\cL=n-1}}
  \inf_{\substack{x\in\cM\setminus\{0\} \\[0.2ex] x\perp\cL}}\; p(x),
  \qquad n\in\NN,\,n\le N.
\end{equation}
Moreover, if, in addition, $T$ satisfies the condition \VM, $N$ is finite
and $\sess(T)\cap(\gamma,\beta)\ne\varnothing$, then
\begin{equation} \label{var_lae_holom}
  \lae = \sup_{\cM\in\M_\gamma^+} \sup_{\substack{\cL\subset\cM \\[0.2ex] \dim\cL=n-1}}
  \inf_{\substack{x\in\cM\setminus\{0\} \\[0.2ex] x\perp\cL}}\; p(x),
  \qquad n > N.
\end{equation}
\end{theorem}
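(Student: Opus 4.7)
The inequality ``$\le$'' in \eqref{varequ_holom} and \eqref{var_lae_holom} is an immediate corollary of Theorem~\ref{th:varinequ} (the hypotheses there being strictly weaker than the ones made here), so the entire task is the reverse inequality. The plan is to use Theorem~\ref{th:decomp} to construct, for each $n$, an explicit maximal $\frt(\gamma)$-non-negative subspace $\cM$ together with a subspace $\cL\subset\cM$ that witness the required lower bound.

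For \eqref{varequ_holom} with $n\le N$, the non-accumulation hypothesis lets me pick $\mu\in\rho(T)\cap(\lambda_n,\lae)$ with $\sigma(T)\cap(\lambda_n,\mu]=\varnothing$. Theorem~\ref{th:decomp} applied to the interval $[\gamma,\mu]$ then produces eigenvectors $u_1,\dots,u_n$ associated with $\lambda_1,\dots,\lambda_n$ and the direct sum
\[
  \cH = \cL_{(-\infty,0)}\bigl(T(\gamma)\bigr) \dotplus \spn\{u_1,\dots,u_n\} \dotplus \cL_{(0,\infty)}\bigl(T(\mu)\bigr).
\]
I would take
\[
  \cM \defeq \spn\{u_1,\dots,u_n\} + \bigl(\cL_{(0,\infty)}(T(\mu))\cap\cD\bigr),
  \qquad \cL \defeq \spn\{u_1,\dots,u_{n-1}\}.
\]
Applying Lemma~\ref{le:05}(i) with the chain $\gamma\le\lambda_1\le\dots\le\lambda_n\le\mu$ and the fact that $\frt(\mu)[y]\ge 0$ on $\cL_{(0,\infty)}(T(\mu))\cap\cD$ gives that $\cM$ is $\frt(\gamma)$-non-negative. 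Maximality in $\M_\gamma^+$ then follows by reading off from the displayed decomposition that, since $\gamma\in\rho(T)$ makes $(\cD,\frt(\gamma))$ a Krein space, $\cM$ is (up to intersection with $\cD$) the graph of a bounded angular operator over the positive part $\cL_{(0,\infty)}(T(\gamma))\cap\cD$.

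A second application of Lemma~\ref{le:05}(i), now with $\lambda_n$ replacing $\gamma$, shows that $\frt(\lambda_n)[x]\ge 0$ for every $x=y+\sum_{j=1}^n c_j u_j\in\cM$. By \eqref{T104} this means $p(x)\ge\lambda_n$ for all nonzero $x\in\cM$, so in particular
\[
  \inf_{\substack{x\in\cM\setminus\{0\}\\ x\perp\cL}} p(x) \ge \lambda_n,
\]
which establishes \eqref{varequ_holom}.

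For \eqref{var_lae_holom} (the case $n>N$) the plan is to take a sequence $\mu_k\in\rho(T)\cap(\lambda_N,\lae)$ with $\mu_k\nearrow\lae$ and run the analogous construction on each $[\gamma,\mu_k]$, yielding $\cM_k\defeq\spn\{u_1,\dots,u_N\}+(\cL_{(0,\infty)}(T(\mu_k))\cap\cD)\in\M_\gamma^+$. Each $\cM_k$ is infinite-dimensional, because $T(\mu_k)$ has positive spectrum accumulating at $0$ as $\mu_k\to\lae$, so a suitable $(n-1)$-dimensional $\cL_k\subset\cM_k$ containing $\spn\{u_1,\dots,u_N\}$ can be chosen. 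The main obstacle I anticipate is to prove
\[
  \inf_{\substack{x\in\cM_k\setminus\{0\}\\ x\perp\cL_k}} p(x) \longrightarrow \lae
  \qquad \text{as }k\to\infty,
\]
because orthogonality to the eigenvectors does \emph{not} force the coefficients $c_j$ in the decomposition $x=y+\sum c_j u_j$ to vanish. Controlling this residual will require combining the uniform boundedness of the oblique projections supplied by Theorem~\ref{th:decomp} (available thanks to norm-resolvent continuity of $T$) with the strict positivity $\frt(\mu_k)[y]\ge\delta_k\|y\|^2$ on $\cL_{(0,\infty)}(T(\mu_k))\cap\cD$ guaranteed by $\mu_k\in\rho(T)$, so that the negative contribution of $\sum c_j u_j$ to $\frt(\mu_k)[x]$ is dominated by that of $y$ once $\mu_k$ is sufficiently close to $\lae$.
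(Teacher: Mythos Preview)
Your argument for \eqref{varequ_holom} has a real gap at the second application of Lemma~\ref{le:05}(i). That lemma requires the lower endpoint to lie \emph{below} all the eigenvalues involved; with $\lambda_n$ in the role of the lower endpoint you would need $\lambda_n\le\lambda_1$, which fails whenever $\lambda_1<\lambda_n$. Concretely, $u_1\in\cM$ and $\frt(\lambda_n)[u_1]<0$ by (A3), so $p(u_1)<\lambda_n$; thus $p(x)\ge\lambda_n$ cannot hold for all nonzero $x\in\cM$ (indeed, if it did, the case $n=1$ of Theorem~\ref{th:varinequ} would force $\lambda_1\ge\lambda_n$). Your choice $\cL=\spn\{u_1,\dots,u_{n-1}\}$ does not rescue this, because Hilbert-space orthogonality $x\perp u_j$ does not force the direct-sum coefficients $c_j$ in $x=y+\sum c_j u_j$ to vanish; an element of $\cM\ominus\cL$ can still carry a large $u_1$-component.

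The missing device is a choice of $\cL$ that \emph{does} convert $x\perp\cL$ into membership in a subspace on which Lemma~\ref{le:05}(i) legitimately applies at level $\lambda_n$. The paper takes $m=\max\{k:\lambda_k=\lambda_n\}$, lets $P$ be the orthogonal projection onto $\cK\defeq\spn\{u_n,\dots,u_m\}+\cL_{(0,\infty)}(T(\mu))$, and sets $\cL\defeq(I-P)\spn\{u_1,\dots,u_{n-1}\}=(I-P)\cM$. For $x\in\cM$ with $x\perp\cL$ one then computes $\|x\|^2=\langle Px,x\rangle+\langle(I-P)x,x\rangle=\|Px\|^2$, whence $x\in\cK$; on $\cK$ all eigenvalues present are $\ge\lambda_n$, so Lemma~\ref{le:05}(i) now gives $\frt(\lambda_n)[x]\ge0$. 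One still has to verify $\cL\subset\cM$ and $\dim\cL=n-1$, both of which use the directness of the decomposition \eqref{decomp}. Exactly the same projection trick handles \eqref{var_lae_holom} cleanly: with $P$ projecting onto $\cL_{(0,\infty)}(T(\mu))$, take any $(n-1)$-dimensional $\cL\supset(I-P)\cM$ and obtain $p(x)\ge\mu$ for every $x\in\cM$ with $x\perp\cL$ directly---no limiting argument, no uniform control of oblique projections, and no ``main obstacle''.
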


\medskip

\begin{remark}\label{re:triple_holom}
If $T$ is a holomorphic family of type (B) in a neighbourhood of $\Delta$
and Assumption (A3)$'$ is satisfied, then
one does not have to assume that the eigenvalues cannot accumulate in $[\gamma,\lae)$,
but this follows from Proposition~\ref{pr:holom}.
Theorem~\ref{th:var_equ} and Proposition~\ref{pr:holom} can be applied, e.g.\ to
operator polynomials and Schur complements of certain block operator matrices;
for the latter see \cite{LS_bom}.
\end{remark}

\begin{proof}
The inequalities `$\ge$' in \eqref{varequ_holom} and \eqref{var_lae_holom} follow from
Theorem~\ref{th:varinequ}.  We first prove `$\le$' in \eqref{varequ_holom}. Let $1\le n \le N$.
It is sufficient to find a subspace $\cM\in\M_\gamma^+$
and a subspace $\cL\subset\cM$ with $\dim\cL=n-1$ such that
\begin{equation}\label{T115}
  \inf_{\substack{x\in\cM\setminus\{0\} \\[0.2ex] x\perp\cL}} p(x)
  \ge \lambda_n.
\end{equation}
Let $m=\max\{k\in\NN:\lambda_k=\lambda_n\}$ and choose $\mu>\lambda_n$ such
that $(\lambda_n,\mu]\subset\rho(T)$.
Moreover, let $u_1,\dots,u_m$ be linearly independent eigenvectors of $T$
corresponding to the eigenvalues $\lambda_1,\dots\lambda_m$ (see Lemma~\ref{le:06}).
Consider the subspace
\[
  \cM \defeq \spn\{u_1,\dots,u_m\}+\bigl(\mathcal{L}_{(0,\infty)}(T(\mu))\cap\cD\bigr).
\]
From Lemma~\ref{le:05}\,(i) we obtain that $\frt(\gamma)[x]\ge0$ for all $x\in\mathcal{M}$.
Since $\cL_{(-\infty,0)}(T(\gamma))\subset\cD$ and $u_k\in\cD$, $k=1,\dots,m$,
Theorem~\ref{th:decomp} implies that the following decomposition of $\cD$ is valid:
\begin{equation}\label{decomp_D}
  \cD = \cL_{(-\infty,0)}\bigl(T(\gamma)\bigr) \dotplus \spn\{u_1,\dots,u_m\}
  \dotplus \bigl(\cL_{(0,\infty)}(T(\mu))\cap\cD\bigr).
\end{equation}
It follows from this decomposition that $\cM$ is maximal $\frt(\gamma)$-non-negative,
i.e.\ $\cM\in\M_\gamma^+$.
Let $P$ be the orthogonal projection in $\cH$ onto
\[
  \cK \defeq \spn\{u_n,\dots,u_m\}+\cL_{(0,\infty)}\bigl(T(\mu)\bigr)
\]
and set
\[
  \cL \defeq (I-P)\cM = (I-P)\spn\{u_1,\dots,u_{n-1}\}.
\]
Since
\[
  \ran(I-P) = \cK^\perp \subset \cL_{(-\infty,0)}\bigl(T(\mu)\bigr)
  \subset \dom\bigl(T(\mu)\bigr) \subset \cD
\]
and
\[
  (I-P)u_k = u_k-Pu_k \in \cM+\cK, \qquad k=1,\dots,n-1,
\]
we have $\cL \subset \cD\cap(\cM+\cK) = \cM$.
We show that the mapping $I-P$ is injective on $\spn\{u_1,\dots,u_{n-1}\}$.
Assume that this is not the case.  Then there exists a $u\in\spn\{u_1,\dots,u_{n-1}\}$,
$u\ne0$, such that $u\in\ker(I-P)=\cK$, i.e.\ there exist $\alpha_1,\ldots,\alpha_m\in\CC$
and $x\in\cL_{(0,\infty)}(T(\mu))\cap\cD$ such that
\[
  u = \alpha_1u_1+\ldots+\alpha_{n-1}u_{n-1} = \alpha_nu_n+\ldots+\alpha_mu_m+x.
\]
Since the sum in \eqref{decomp_D} is direct, we have $x=0$, and therefore
$\alpha_1=\ldots=\alpha_m=0$ because of the linear independence of $u_1,\dots,u_m$.
Hence $I-P$ is injective on $\spn\{u_1,\dots,u_{n-1}\}$, which shows that
$\dim\cL=n-1$.

Now let $x\in\cM\setminus\{0\}$ such that $x\perp\cL=(I-P)\cM$.  Then $x\in\cD$,
and the relation $x=Px+(I-P)x$ implies that
\[
  \|x\|^2 = \langle Px,x\rangle + \langle(I-P)x,x\rangle = \|Px\|^2,
\]
which shows that $x\in\ran P=\cK$.  It follows from Lemma~\ref{le:05}\,(i)
that $\frt(\lambda_n)[x]\ge0$, which proves \eqref{T115}.

In order to prove \eqref{var_lae_holom}, assume that $N$ is finite and let $n>N$.
Moreover, let $\mu\in(\lambda_N,\lae)$ be arbitrary and $P$ be the orthogonal projection
in $\cH$ onto $\cL_{(0,\infty)}(T(\mu))$.
Similarly to the first part of the proof we can choose
\[
  \cM \defeq \spn\{u_1,\dots,u_N\}+\bigl(\cL_{(0,\infty)}(T(\mu))\cap\cD\bigr),
\]
which is in $\M_\gamma^+$.  The space $\cL'\defeq(I-P)\cM$ is an
$N$-dimensional subspace of $\cM$, which can be seen as above.  Extend $\cL'$ to
an $(n-1)$-dimensional subspace $\cL$ of $\cM$.  Then
\[
  \inf_{\substack{x\in\cM\setminus\{0\} \\[0.2ex] x\perp\cL}} p(x) \ge \mu,
\]
which shows \eqref{var_lae_holom} since $\mu\in(\lambda_N,\lae)$ was arbitrary.
\end{proof}

\begin{example}
Let $C$ be a self-adjoint operator in a Hilbert space $\cH$ that is bounded
from below, let $\frc$ be the corresponding quadratic form,
and assume that $0\in\rho(C)$.
Moreover, let $\frb$ be a symmetric non-positive quadratic form
that is $\frc$-bounded with relative bound $0$, i.e.\
$\dom(\frc)\subset\dom(\frb)$ and for each $b>0$ there exists an $a\ge0$
such that
\[
  \bigl|\frb[x]\bigr| \le a\|x\|^2 + b\bigl|\frc[x]\bigr|, \qquad x\in\dom(\frc).
\]
For instance, $\frb$ can be a form corresponding to a $C$-compact operator.
Then the form
\[
  \frt(\lambda)[x,y] \defeq -\lambda^2\langle x,y\rangle + \lambda\frb[x,y] + \frc[x,y],
  \qquad x,y\in\dom(\frc),
\]
is sectorial and closed for every $\lambda\in\CC$ and hence defines an m-sectorial
operator $T(\lambda)$.
Clearly, $T$ is a holomorphic family of type (B) on $\CC$
and Assumptions (A1) and (A2) are satisfied on $\Delta\defeq[0,\infty)$.

For $x\in\cD\setminus\{0\}$ denote by $p_\pm(x)$ the solutions of
$\frt(\lambda)[x]=0$
with $p_-(x)\le p_+(x)$ if the solutions are real, and set $p_\pm(x)\defeq-\infty$ otherwise.
Since $\frb[x]\le0$, we have $p_-(x)\le0$ for $x\in\cD\setminus\{0\}$,
Assumption (A3) is satisfied
and $p_+$ is a generalised Rayleigh functional for $T$ on $\Delta$.
Hence we can apply Theorem~\ref{th:var_equ} together with Proposition~\ref{pr:holom}
(see Remark~\ref{re:triple_holom}) with $\gamma=0$, which yields a
characterisation of the eigenvalues $\lambda_1\le\lambda_2\le\cdots$
in the interval $(0,\min(\sess(T)\cap(0,\infty)))$, given by the
formula in \eqref{varequ_holom} with $p_+$ instead of $p$ and $\gamma=0$.

We can compare these eigenvalues with the eigenvalues of the
operator polynomial $T_0(\lambda) = -\lambda^2 + C$, which satisfies also
all assumptions of Theorem~\ref{th:var_equ}.
The corresponding generalised Rayleigh functional
is $\mathring p_+(x)=\sqrt{\frc[x]}/\|x\|$ if $\frc[x]\ge0$.
Since $T(0)=T_0(0)$, the maximal non-negative subspaces are the same for $T$
and $T_0$ at $\gamma=0$.  Denote by $\mu_1\le\mu_2\le\cdots$ the eigenvalues
of $C$ in the interval $(0,\min(\sess(C)\cap(0,\infty)))$.
Since $p_+(x) \le \mathring p_+(x)$, we obtain the
inequalities $\lambda_n \le \sqrt{\mu_n}$.

\end{example}

\section*{Acknowledgements}
Both authors gratefully acknowledge the support of the
Engineering and Physical Sciences Research Council (EPSRC), grant no.\
EP/E037844/1. \\
M.~Strauss gratefully acknowledges the support from the
Wales Institute of Mathematical and Computational Sciences
and the Leverhulme Trust, grant no.\ RPG-167.


\end{document}